\documentclass[11pt]{article}
\usepackage{amsmath,amsthm}
\usepackage{amssymb}
\usepackage{bm}
\usepackage{subcaption}
\usepackage{xcolor}
\usepackage{tikz}
\usetikzlibrary{calc}

\usepackage{geometry}
\usepackage[colorlinks=true,
linkcolor=blue,citecolor=blue,
urlcolor=blue,pagebackref]{hyperref}

\makeatletter
\def\@seccntDot{.}
\def\@seccntformat#1{\csname the#1\endcsname\@seccntDot\hskip 0.5em}
\renewcommand\section{\@startsection{section}{1}{\z@}%
{18\p@ \@plus 6\p@ \@minus 3\p@}%
{9\p@ \@plus 6\p@ \@minus 3\p@}%
{\large\bfseries\boldmath}}
\renewcommand\subsection{\@startsection{subsection}{2}{\z@}%
{15\p@ \@plus 6\p@ \@minus 3\p@}%
{6\p@ \@plus 6\p@ \@minus 3\p@}%
{\itshape}}
\renewcommand\subsubsection{\@startsection{subsubsection}{3}{\z@}%
{12\p@ \@plus 6\p@ \@minus 3\p@}%
{\p@}%
{}}
\makeatother

\usepackage{microtype}

\theoremstyle{plain}
\newtheorem{theorem}{Theorem}[section]
\newtheorem{lemma}{Lemma}[section]

\newtheorem{proposition}{Proposition}[section]

\newtheorem{conjecture}{Conjecture}[section]

\theoremstyle{definition}

\numberwithin{equation}{section}
\allowdisplaybreaks
\title{A complete solution to the Boots-Royle/Cao-Vince conjecture}

\author{Lele Liu\footnote{School of Mathematical Sciences, Anhui University, Hefei 230601,
P.R. China. E-mail: \texttt{liu@ahu.edu.cn} (L. Liu). Supported by the National
Nature Science Foundation of China (No. 12471320), and Anhui Provincial Natural Science Foundation for Excellent Young Scholars (No. 2408085Y003).}\,~~~~
Bo Ning\footnote{College of Cryptology and Cyber Science \& College of Computer Science, Nankai University, Tianjin 300350, P.R. China.
E-mail: \texttt{bo.ning@nankai.edu.cn} (B. Ning). Partially supported by the National Nature Science
Foundation of China (No. 12371350) and Fundamental Research Funds for the Central Universities, Nankai University (No. 63243151).}\,~~~~
Yi Wang\footnote{School of Mathematical Sciences, Anhui University, Hefei 230601,
P.R. China. E-mail: \texttt{wangy@ahu.edu.cn} (Y. Wang).
Supported by the National Natural Science Foundation of China (No. 12171002, 12331012)}
}

\date{}

\begin{document}
\maketitle

\begin{abstract}
Boots and Royle, and independently Cao and Vince, conjectured that the join of an edge and a path 
on $n-2$ vertices is the unique planar graph of maximum adjacency spectral radius for $n\geq 9$. 
Tait and Tobin (JCTB, 2017) proved the conjecture for sufficiently large order. In this paper, 
we completely resolved the Boots-Royle/Cao-Vince conjecture.
\par\vspace{2mm}
\noindent{\bfseries Keywords:} Spectral radius; Planar graph; Perron vector; Spectral extremal graph theory.
\par\vspace{2mm}
\noindent{\bfseries AMS Classification:} 05C35; 05C50; 15A18
\end{abstract}

\section{Introduction}
Let $G$ be a simple graph, and let $\lambda(G)$ denote the spectral radius of its
adjacency matrix. A classical problem at the intersection of spectral graph
theory and topological graph theory is to determine how large $\lambda(G)$ can be
when $G$ is planar, and, more ambitiously, to characterize all graphs attaining
the maximum. The study of this problem goes back at least to Schwenk and
Wilson~\cite{SchwenkWilson1978}. In 1988, Hong \cite{Hong1988} proved that every planar
graph of order $n$ satisfies $\lambda(G)\leq \sqrt{5n-11}$ by using the famous Hong's spectral 
inequality and Euler formula. Cao and Vince~\cite{CaoVince1993} subsequently improved this
estimate to $\lambda(G) < 4 + \sqrt{3n-9}$. Further improvements were obtained by 
Hong~\cite{Hong1995,Hong1998}, and Ellingham and Zha~\cite{EllinghamZha2000} later established the bound
$\lambda(G)\leq 2+\sqrt{2n-6}$. These results progressively narrowed the gap between the known upper bounds
and the natural planar constructions $K_2\vee P_{n-2}$, where $\vee$ denotes the graph join. They also shifted attention from merely
estimating the spectral radius to the more delicate problem of determining the exact extremal graph.

There are two famous conjectures in the related area. The first concerns outerplanar graphs. 
In their 1990 survey \cite{CvetkovicRowlinson1990}, Cvetkovi\'c and Rowlinson conjectured
that $K_1\vee P_{n-1}$ attains the maximum spectral radius among all outerplanar graphs of order $n$. Following partial results of Rowlinson,
Cao and Vince \cite{CaoVince1993}, Tait and Tobin~\cite{TaitTobin2017} confirmed the
conjecture for all sufficiently large $n$. The remaining finite part was
eventually settled by Lin and Ning~\cite{LinNing2021}, who proved that the
conjectured fan is extremal for every $n\geq 2$, with $n=6$ as the unique
exception. The proof is with aid of a result of Shu and Hong \cite{ShuHong2000}, which states that $\lambda(G)\leq 3/2 + \sqrt{n - 7/4}$.
Thus, the outerplanar spectral extremal problem is now completely
understood.

The second and more difficult conjecture is about planar graphs. Motivated in part by the use 
of the spectral radius as a measure of the connectivity of planar networks, Boots and Royle~\cite{BootsRoyle1991}
conjectured in 1991 that, for every $n\geq 9$, the unique planar graph of order
$n$ with maximum spectral radius is $K_2\vee P_{n-2}$. The same conjecture was proposed independently 
by Cao and Vince~\cite{CaoVince1993}. 

\begin{conjecture}[Boots-Royle-Cao-Vince]
Among all planar graphs on $n\geq 9$ vertices, $K_2\vee P_{n-2}$ attains the maximum spectral radius.   
\end{conjecture}

\begin{figure}[htbp]
\centering
\begin{tikzpicture}[main_node/.style={circle,draw,fill=blue!40,inner sep=.65mm}, scale=.9]
\node[main_node] (u1) at (-0.6, 1.3) {};
\node[main_node] (u2) at (-0.6, -1.3) {};
\node[main_node] (v4) at (5, 0) {};

\foreach \i in {0,1,...,3}
{%
\node[main_node] (v\i) at (\i, 0) {};
\draw[thick] (u1) -- (v\i);
\draw[thick] (u2) -- (v\i);
}

\draw[thick][thick] (v0) -- (v1) -- (v2) -- (v3);
\draw[thick] (u1) -- (u2);
\draw[thick] (u1) -- (v4) -- (u2);
\draw[thick] (v3) -- (3.2,0);
\node at (4,0) {$\cdots\cdot$};

\foreach \i in {0,1,...,3}
{%
\node[main_node] (v\i) at (\i, 0) {};
}

\node[left=2pt] at (u1) {$u_1$};
\node[left=2pt] at (u2) {$u_2$};
\node[anchor=north west] at (v0) {\small $v_1$};
\node[below=1pt] at (v1) {\small $v_2$};
\node[below=1pt] at (v4) {\small $v_{n-2}$};
\end{tikzpicture}
\caption{The graph $K_2\vee P_{n-2}$}
\label{fig:candidate}
\end{figure}

An important improvement was announced in Guiduli's 1996 doctoral dissertation \emph{Spectral Extrema for Graphs}
at the University of Chicago, written under the supervision of Babai~\cite{Guiduli1996}. It was later reported 
that Guiduli and Hayes had proved, in an unpublished preprint, that the planar conjecture holds for
sufficiently large $n$; however, that manuscript never appeared in published form (see also the introduction 
of Tait and Tobin \cite{TaitTobin2017}). In 2017, Tait and Tobin~\cite{TaitTobin2017} proved
that $K_2\vee P_{n-2}$ is indeed the unique planar graph of maximum spectral radius whenever $n$ is sufficiently large.

The original conjecture was formulated for $n\geq 9$. The purpose of this paper is to give a complete proof of the conjecture. 

\begin{theorem}\label{thm:main}
For every $n\geq 3$, except for $n\in\{7,8\}$, the graph $K_2\vee P_{n-2}$ is the unique 
planar graph on $n$ vertices with maximum adjacency spectral radius. For $n\in\{7,8\}$, 
the extremal graphs are the exceptional graphs shown in Figure~\ref{fig:extremal-graphs-7-8}.
\end{theorem}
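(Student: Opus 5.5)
Since adding edges strictly increases the spectral radius of a connected graph (Perron--Frobenius), an extremal planar graph $G$ on $n$ vertices may be taken to be a maximal planar graph, i.e.\ a triangulation with $3n-6$ edges. For small $n$ (say $n\le N_0$ with $N_0$ around $12$--$15$) I would finish by exhaustive computer enumeration of triangulations (plantri), which also yields the exceptional graphs for $n\in\{7,8\}$ shown in Figure~\ref{fig:extremal-graphs-7-8}. The substance is therefore a uniform argument for $n> N_0$ that makes the Tait--Tobin structural method effective, with explicit constants instead of ``$n$ sufficiently large''.

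Write $\lambda=\lambda(G)$ and let $\mathbf{x}$ be the Perron vector, normalized so that $x_{u_1}=\max_v x_v=1$. Three bounds anchor the argument:
\begin{itemize}
\item $\lambda\ge\lambda(K_2\vee P_{n-2})>\tfrac12+\sqrt{2n-4}$, which I would check from the quotient (equitable partition) matrix of the candidate graph.
\item $\lambda\le 2+\sqrt{2n-6}$, by Ellingham--Zha.
\item Summing the eigen-equation twice gives $\lambda^2x_u=\sum_{v}x_v\,|N(u)\cap N(v)|$.
\end{itemize}
Combining these with the fact that planar graphs have no $K_{3,3}$ (so any two vertices share at most two common neighbours with a third vertex, and more generally $e(S,T)\le 2(|S|+|T|)-4$ for bipartite planar subgraphs), I will show the following, with explicit constants:
\begin{itemize}
\item the set $L=\{v: x_v\ge \varepsilon\}$ is bounded in size;
\item $u_1$ has degree at least $n-c_1$;
\item there is a second vertex $u_2$ with $x_{u_2}\ge 1-c_2/\sqrt n$ and degree at least $n-c_1$.
\end{itemize}
The key estimate is $\lambda^2 x_{u}\le \sum_v x_v + O(\text{edges})$ applied to $u=u_1$, together with $\lambda x_v\le d(v)$ and Euler's bound $\sum_v d(v)\le 6n-12$. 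Each constant has to be tracked numerically rather than asymptotically.

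Next I show that $u_1,u_2$ are adjacent to every other vertex. Suppose some $w$ misses $u_1$. Since $x_w$ is small, a local rotation keeps planarity and increases $\mathbf{x}^{T}A\mathbf{x}$, contradicting extremality: delete the edges from $w$ to low-weight vertices and join $w$ to both $u_1$ and $u_2$. The gain is at least $2x_w(x_{u_1}+x_{u_2}-\text{small})$ against a loss of at most $2x_w\cdot(\text{sum of a few small entries})$. Once $u_1,u_2$ dominate everything, $G-\{u_1,u_2\}$ is a planar graph drawn with all vertices on one face relative to two apices, hence a linear forest (a $K_{2}\vee$ forest is planar only if the forest is a disjoint union of paths). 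Maximality forces it to be a Hamiltonian path. Finally $u_1u_2$ is an edge by maximality, so $G=K_2\vee P_{n-2}$.

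The main obstacle is making the first two steps work for moderate $n$, roughly $n$ down to $N_0$. Tait--Tobin's error terms are $O(1/\sqrt n)$ with unspecified constants. I would first try the sharper tools available:
\begin{itemize}
\item the two-sided bound $\tfrac12+\sqrt{2n-4}<\lambda\le 2+\sqrt{2n-6}$;
\item weight inequalities of the form $\sum_v x_v\le \lambda+\text{const}$, derived from $\lambda^2=\sum_{v}x_v|N(u_1)\cap N(v)|$ with $x_{u_1}=1$.
\end{itemize}
I would also push the computer verification as high as feasible, up to perhaps $n\approx 20$, using plantri restricted to triangulations with maximum degree near $n-1$. This should close whatever gap the explicit analytic constants leave.
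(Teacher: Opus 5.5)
\paragraph{Verdict: genuine gap.} Your outer frame is sound and matches the paper's. Extremal graphs are triangulations, and small $n$ (including the exceptional cases $n\in\{7,8\}$) is settled by computer. Once two universal vertices are known, $K_{3,3}$-freeness plus the edge count $3n-6$ force $G-\{u_1,u_2\}=P_{n-2}$. The rotation step that makes $u_1,u_2$ universal is a standard, workable argument, after first securing $u_1u_2\in E(G)$, e.g.\ by an edge flip inside an empty quadrilateral $u_1w_iu_2w_{i+1}$.

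\paragraph{The gap.} The problem is the middle step, and that step is the entire difficulty of the conjecture. You only announce the three structural claims ``with explicit constants'' and give no argument for them: a bounded heavy set, $d(u_1)\ge n-c_1$, and a second heavy vertex of degree $\ge n-c_1$. There is no reason to expect the Tait--Tobin method to become valid anywhere near $n\approx 20$. Their argument is a chain of estimates with small parameters and $O(1/\sqrt n)$ error terms, each valid only for $n$ large in terms of those parameters. Your benchmark $\tfrac12+\sqrt{2n-4}$ is also about a full unit below $\lambda(K_2\vee P_{n-2})\approx\tfrac32+\sqrt{2n}$, which worsens every constant.

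Making such a chain explicit produces thresholds far beyond what enumeration can reach. The number of triangulations grows by a factor of about $8$ per vertex around $n=20$. Restricting plantri to $\Delta\ge n-c_1$ does not help, because it presupposes the analytic step at exactly those $n$. So ``this should close whatever gap the constants leave'' is a hope, not a proof.

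\paragraph{The missing idea.} What is needed is a mechanism giving exact, non-asymptotic inequalities. The paper obtains these from the cubic vertices of the extremal triangulation.
\begin{itemize}
\item It normalizes $\|\bm{x}\|_1=1$ and uses $\lambda=\sum_v d(v)x_v=4+E-L$. Here $E=\sum_v\max\{0,d(v)-4\}x_v$ with $\sum_v\max\{0,d(v)-4\}=2n-12+k$, and $L$ is the total weight of the $k$ cubic vertices.
\item That $k\ge 2$ comes from the Liu--Weng degree-sequence bound, played against the sharp benchmark $r_n=(3+\sqrt{8n-19})/2$.
\item Extremality under relocating a cubic vertex $v$ into any face $F$ gives $S(F)\le\lambda x_v$. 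This yields $L\ge L_0(\lambda)$ and comparability of the cubic weights.
\item The identity $r_v=1-(\lambda+1)x_v$ together with $K_{3,3}$-freeness gives linear constraints on $s=x_1+x_2$ and $z=x_3$, such as $\lambda s+(\lambda-2)z\le 2$.
\item If no vertex had degree $n-1$, then $E\le(n-6)s+kz$, and a two-variable linear optimization gives $\lambda<r_n$, a contradiction.
\item The same scheme applied in the maximal outerplanar graph $G-v_1$ forces a second universal vertex.
\end{itemize}
Nothing here is asymptotic, which is why the analytic argument meets the computer search already at $n=15$.
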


\begin{figure}[htbp]
\centering
\subcaptionbox{$n=7$ \label{subfig:7-vertex}}[.44\textwidth]{%
\begin{tikzpicture}[main_node/.style={circle,draw,fill=blue!40,inner sep=.65mm}, scale=.9]

\node[main_node] (0) at (1.8, 0) {};
\node[main_node] (1) at (5, 0) {};
\node[main_node] (2) at (3.2, -.3) {};
\node[main_node] (3) at (0, -1.5) {};
\node[main_node] (4) at (0.8, 0) {};
\node[main_node] (5) at (0, 1.5) {};
\node[main_node] (6) at (3.2, .3) {};

\draw[thick] (5) -- (0) -- (1) -- (2) -- (3) -- (4) -- (5) -- (6) -- (0) -- (3) -- (5) -- (1) -- (3);
\draw[thick] (2) -- (0) -- (4); 
\draw[thick] (1) -- (6); 
\end{tikzpicture}
}
\subcaptionbox{$n=8$ \label{subfig:8-vertex}}[.44\textwidth]{%
\begin{tikzpicture}[main_node/.style={circle,draw,fill=blue!40,inner sep=.65mm}, scale=.9]

\node[main_node] (0) at (0, -1.5) {};
\node[main_node] (1) at (0, 1.5) {};
\node[main_node] (2) at (.6, 0) {};
\node[main_node] (3) at (1.5, 0) {};
\node[main_node] (4) at (2.5, -.3) {};
\node[main_node] (5) at (4, 0) {};
\node[main_node] (6) at (5.2, 0) {};
\node[main_node] (7) at (2.5, .3) {};

\draw[thick] (0) -- (1) -- (2) -- (3) -- (4) -- (5) -- (6) -- (0) -- (2);
\draw[thick] (0) -- (3) -- (1) -- (5) -- (3) -- (7) -- (5) -- (0) -- (4);
\draw[thick] (6) -- (1) -- (7);
\end{tikzpicture}
}
\caption{The planar graphs with maximum spectral radius for $n=7,8$}
\label{fig:extremal-graphs-7-8}
\end{figure}

The corresponding problem for graphs embeddable on surfaces of arbitrary genus provides a broader 
setting for these results. Let $\operatorname{spex}(n,\gamma)$ denote the maximum spectral radius of an
$n$-vertex graph embeddable on a surface of Euler genus $\gamma$. Hong
initiated its systematic study and obtained, among other genus-dependent bounds,
$\operatorname{spex}(n,\gamma) \leq 1+\sqrt{3n+6\gamma-8}$. Ellingham and Zha \cite{EllinghamZha2000} improved this to $2+\sqrt{2n+8\gamma-6}$. Very recently, 
Zhai, Fang and Lin~\cite{ZhaiFangLin2026} proved that, for each fixed $\gamma$ and all
sufficiently large $n$, every spectral-extremal graph is obtained from $K_2\vee P_{n-2}$ by 
adding exactly $3\gamma$ edges inside a bounded core. Related extremal questions have also 
been studied for the tensor spectral radius of uniform hypergraphs. In particular, Ellingham, Lu and Wang~\cite{EllinghamLuWang2022} established an outerplanar $3$-uniform hypergraph analogue of 
the Cvetkovi\'c--Rowlinson conjecture for all sufficiently large orders.

We conclude this section with an outline of the paper. In Section \ref{sec:notation}, we
introduce the notation and preliminary tools, including a lower bound on the extremal spectral radius
supplied by $K_2\vee P_{n-2}$. Section \ref{sec:series-lemmas} focuses on cubic vertices in
an extremal plane triangulation, deriving constraints on the three largest Perron entries that lead to the structural inequalities essential for the subsequent arguments. In
Section \ref{sec:first-universal-vertex}, we prove that every extremal graph must contain a vertex of degree $n-1$. Section \ref{sec:second-universal-vertex} applies a similar analysis to that of Section \ref{sec:series-lemmas}, now in the setting of outerplanar graphs, to force the existence of a second universal vertex. Finally, Section \ref{sec:finish-proof}
uses the two universal vertices and planarity to identify the remaining graph
as a path, which completes the proof.

\section{Notation and preliminary results}
\label{sec:notation}

For a graph $G$ on $n$ vertices, we write $e(G)$ for the number of edges of $G$. For a vertex $v$, 
we write $N_G(v)$ and $N_G[v]$ for its open and closed neighborhoods, respectively, 
and $d_G(v)$ for its degree. In the above notation, we will skip the subscript $G$ 
when $G$ is clear from context. 
We refer to a vertex of degree $3$ as a \emph{cubic vertex}; and a vertex of degree $n-1$ as a \emph{universal vertex}.
We also use the usual facts about plane triangulations and maximal outerplanar graphs; 
see, for example, \cite[Chapter 10]{BondyMurty}.

\begin{theorem}[Liu--Weng \cite{LiuWeng}]\label{thm:degree}
Let $G$ be a connected graph on $n$ vertices with degree sequence
$d_1\geq d_2\geq\cdots\geq d_n$. For every $1\leq j\leq n$,
\[
\lambda(G)\leq \frac{d_j - 1 + \sqrt{(d_j+1)^2 + 4\sum_{i=1}^{j-1} (d_i - d_j)}}{2}.
\]
\end{theorem}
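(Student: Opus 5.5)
The plan is the standard Perron-vector argument with a degree-based truncation. Let $A$ be the adjacency matrix, $\lambda=\lambda(G)$, and let $\mathbf{x}$ be the positive Perron vector. It exists and is positive because $G$ is connected. The idea is to compare $A^2\mathbf{x}$ with a linear combination of $A\mathbf{x}$ and $\mathbf{x}$. This should yield a quadratic inequality $\lambda^2-(d_j-1)\lambda-\bigl(d_j+\sum_{i<j}(d_i-d_j)\bigr)\le 0$, whose larger root is exactly the stated bound, since
\[
(d_j-1)^2+4d_j+4\sum_{i<j}(d_i-d_j)=(d_j+1)^2+4\sum_{i<j}(d_i-d_j).
\]

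\textbf{Step 1.} Fix $j$ and compute, for each vertex $u$,
\[
\lambda^2x_u=\sum_{v\sim u}\sum_{w\sim v}x_w=d_ux_u+\sum_{v\sim u}\sum_{w\sim v,\,w\ne u}x_w .
\]
Summing $\lambda^2 x_u$ directly over $u$ is awkward. Instead, sum the eigen-equation $\lambda x_v=\sum_{w\sim v}x_w$ weighted against all-ones, which gives
\[
\lambda\sum_v x_v=\sum_v d_vx_v .
\]

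\textbf{Step 2.} Write $d_v\le d_j+(d_v-d_j)^+$. Only the vertices $v_1,\dots,v_{j-1}$ can have $d_v>d_j$, so
\[
\sum_v d_vx_v\le d_j\sum_v x_v+\sum_{i<j}(d_i-d_j)x_{v_i}.
\]
This alone is too crude. The refinement is to apply the same bound to $A^2$:
\[
\lambda^2\sum_v x_v=\sum_v d_v\lambda x_v=\sum_v d_v\sum_{w\sim v}x_w=\sum_w x_w\sum_{v\sim w}d_v .
\]

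\textbf{Step 3.} Bound $\sum_{v\sim w}d_v$ for each $w$. Split the neighbours of $w$ into those among the first $j-1$ vertices and the rest:
\[
\sum_{v\sim w}d_v\le d_w d_j+\sum_{i<j,\ v_i\sim w}(d_i-d_j)\le d_wd_j+\sum_{i<j}(d_i-d_j).
\]
This is the key step. Edge counting gives the identity $\sum_{v\sim w}d_v = d_w + \sum_{v\sim w}(d_v-1)$, and the sharper estimate should use the $-1$ terms: each neighbour $v$ of $w$ has at most $d_v-1$ neighbours other than $w$. That leads to
\[
\lambda^2x_w\le d_wx_w+\sum_{v\sim w}\sum_{u\sim v,u\ne w}x_u .
\]
A cleaner route is the matrix inequality $A^2\le (d_j-1)A+d_jI+\text{(correction)}$ applied entrywise after summation. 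Pairing this with Step 1 gives
\[
\lambda^2 S\le (d_j-1)\lambda S+d_jS+\sum_{i<j}(d_i-d_j)S ,
\]
where $S=\sum x_v$. The last term uses the crude bound $x_w\le S$ summed appropriately. One really needs $\sum_w x_w\cdot[\text{number of }i<j\text{ with }v_i\sim w]\cdot(d_i-d_j)$, and each such sum is $\sum_{i<j}(d_i-d_j)\lambda x_{v_i}$. The $\lambda$ factors must be balanced, which may require normalizing instead by $\max x$.

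\textbf{Main obstacle.} The difficulty is getting the exact constants $(d_j-1)$ and $d_j$. For that I will switch to the normalization $x_{\max}=1$, attained at $u$. From
\[
\lambda^2=\sum_{v\sim u}\sum_{w\sim v}x_w\le d_u+\sum_{v\sim u}(d_v-1)\cdot\ldots
\]
I will handle the $j-1$ large-degree vertices separately, absorbing the excess $(d_i-d_j)$ as additive terms and using $x\le 1$. The terms coming from within $N(u)$ contribute $\le(d_j-1)\lambda$ via $\sum_{w\sim v}x_w=\lambda x_v$. This is the approach of Hong–Shu–Fang, adapted with $(d_j-1)$ in place of $(\delta-1)$. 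Carrying out this bookkeeping carefully is the step I expect to be the main obstacle.
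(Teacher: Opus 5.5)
The paper gives no proof of this statement; it is quoted from Liu--Weng. Your proposal therefore has to stand on its own, and as written it has a genuine gap at exactly the point you flag.

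What Steps 1--3 actually establish is $\lambda^2S\le d_j\lambda S+\Sigma S$, where $S=\sum_v x_v$ and $\Sigma=\sum_{i<j}(d_i-d_j)$. This gives only $\lambda\le\tfrac12\bigl(d_j+\sqrt{d_j^2+4\Sigma}\bigr)$, which is strictly weaker than the claim when $\Sigma>0$: at that value the target quadratic $\lambda^2-(d_j-1)\lambda-d_j-\Sigma$ equals $\lambda-d_j>0$. Your displayed inequality $\lambda^2S\le(d_j-1)\lambda S+d_jS+\Sigma S$ does not follow, since that would need $\lambda\le d_j$.

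The fallback you describe does not supply it either. With $x_u=\max x=1$, the double sum $\sum_{v\sim u}\sum_{w\sim v,\,w\ne u}x_w$ ranges over vertices at distance two from $u$. Rewriting the inner sums by $\sum_{w\sim v}x_w=\lambda x_v$ merely reproduces $\lambda^2x_u$, so no term $(d_j-1)\sum_{v\sim u}x_v$ can be extracted that way. An entrywise inequality $A^2\le(d_j-1)A+d_jI+\cdots$ is also not the right tool, because $A^2-(d_j-1)A$ has negative entries.

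The missing idea is already in the first inequality of your Step 3, before you relax $\sum_{i<j,\,v_i\sim w}$ to $\sum_{i<j}$. Subtract $(d_j-1)$ times the Step 1 identity from the Step 2 identity; equivalently, multiply $(A^2-(d_j-1)A)\mathbf{x}=(\lambda^2-(d_j-1)\lambda)\mathbf{x}$ on the left by the all-ones vector. Using $\Sigma=\sum_v(d_v-d_j)^+$ and $w\notin N(w)$, this gives
\begin{align*}
(\lambda^2-(d_j-1)\lambda)S&=\sum_w x_w\Bigl(d_w+\sum_{v\sim w}(d_v-d_j)\Bigr),\\
d_w+\sum_{v\sim w}(d_v-d_j)&\le d_w+\sum_{v\ne w}(d_v-d_j)^+=\min\{d_w,d_j\}+\Sigma\le d_j+\Sigma .
\end{align*}
In words: if $d_w>d_j$, then $w$ is one of $v_1,\dots,v_{j-1}$ but is not its own neighbour. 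Its excess $d_w-d_j$ is therefore missing from the neighbourhood sum, and this cancels the surplus of $d_w$ over $d_j$. That cancellation is precisely what your relaxation to the full $\Sigma$ throws away.

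Since $x_w>0$, only these column sums of $A^2-(d_j-1)A$ matter; no nonnegativity of that matrix is needed. You get $\lambda^2-(d_j-1)\lambda-d_j-\Sigma\le 0$, whose larger root is the stated bound by the identity you recorded at the outset.
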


The following lemma is a consequence of Cauchy interlacing theorem, also known as the Poincar\'e 
Separation Theorem (see, e.g., \cite{Haemers1995} or \cite[Corollary 4.3.37]{Horn-Johnson2012}).

\begin{lemma}[\cite{Haemers1995} or {\cite[Corollary 4.3.37]{Horn-Johnson2012}}]\label{lem:interlace-inequality}
Let $S$ be a real $n\times m$ matrix such that $S^{\mathrm{T}} S = I$ and let $A$ be a 
symmetric $n\times n$ matrix with eigenvalues $\lambda_1\geq\cdots\geq\lambda_n$. Define $B = S^{\mathrm{T}} AS$ 
and let $B$ have eigenvalues $\mu_1\geq\cdots\geq\mu_m$. Then 
\[
\mu_i(B) \leq\lambda_i(A)\leq\mu_{i+n-m}(B), \quad i=1,2,\ldots,m.
\]
\end{lemma}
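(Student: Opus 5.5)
The plan is to prove the two inequalities separately with the Courant--Fischer min-max characterization, applied to $A$ on $\mathbb{R}^n$ and to $B=S^{\mathrm{T}}AS$ on $\mathbb{R}^m$. The key observation is that $S^{\mathrm{T}}S=I$ makes $x\mapsto Sx$ an isometric embedding of $\mathbb{R}^m$ onto the $m$-dimensional subspace $W=S\mathbb{R}^m\subseteq\mathbb{R}^n$. Along this embedding the Rayleigh quotients agree:
\[
\frac{x^{\mathrm{T}}Bx}{x^{\mathrm{T}}x}=\frac{(Sx)^{\mathrm{T}}A(Sx)}{(Sx)^{\mathrm{T}}(Sx)} .
\]

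For the first inequality $\mu_i(B)\leq\lambda_i(A)$, I use
\[
\mu_i(B)=\max_{\dim U=i,\ U\subseteq\mathbb{R}^m}\ \min_{0\neq x\in U}\frac{x^{\mathrm{T}}Bx}{x^{\mathrm{T}}x}.
\]
Take an optimal $U$. Then $SU$ is an $i$-dimensional subspace of $\mathbb{R}^n$, and the minimum of the Rayleigh quotient of $A$ over $SU$ equals $\mu_i(B)$. Since $\lambda_i(A)$ is the maximum of this quantity over all $i$-dimensional subspaces of $\mathbb{R}^n$, we get $\mu_i(B)\leq\lambda_i(A)$ for every $1\leq i\leq m$. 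This is the half of the lemma that gives lower bounds on $\lambda(G)=\lambda_1(A)$ from quotient-type matrices $B$.

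For the second inequality I have to flag a problem with the wording rather than prove it. Read literally, $\lambda_i(A)\leq\mu_{i+n-m}(B)$ is only defined when $i+n-m\leq m$, and then it is false whenever $m<n$.
\begin{itemize}
\item The general argument: take $i=1$. Combined with the first inequality it would give $\lambda_1(A)\leq\mu_{1+n-m}(B)\leq\mu_1(B)\leq\lambda_1(A)$, forcing equality throughout. That fails in general.
\item A concrete case: $n=2$, $m=1$, $A=\mathrm{diag}(1,0)$ and $S=e_2$ give $B=(0)$. The claimed inequality for $i=1$ would read $\lambda_1(A)=1\leq\mu_2(B)$, and $\mu_2(B)$ does not even exist since $B$ is $1\times 1$.
\end{itemize}

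What Courant--Fischer actually yields is the shifted statement $\lambda_{i+n-m}(A)\leq\mu_i(B)$. To prove it, use the min-max form
\[
\lambda_k(A)=\min_{\dim V=n-k+1}\ \max_{0\neq y\in V}\frac{y^{\mathrm{T}}Ay}{y^{\mathrm{T}}y},
\]
applied to $-A$, or equivalently the max-min form intersected with $W$. For an $(i+n-m)$-dimensional subspace $V\subseteq\mathbb{R}^n$, the intersection satisfies $\dim(V\cap W)\geq i$. Pulling $V\cap W$ back through $S$ and using the equality of Rayleigh quotients gives $\lambda_{i+n-m}(A)\leq\mu_i(B)$.

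So my plan proves the first inequality exactly as stated. For the second, it exhibits the counterexample above and proves the correct Haemers form $\lambda_{i+n-m}(A)\leq\mu_i(B)$ in its place. The main obstacle is therefore not technical but this mismatch in the indices. I expect later uses in the paper to need only $\mu_1(B)\leq\lambda_1(A)$, which is unaffected.
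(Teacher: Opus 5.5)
Your proof is correct, and so is your diagnosis of the statement. As printed, the display has the indices in the wrong places. The Haemers/Poincar\'e interlacing, with both spectra in decreasing order, reads $\lambda_{i+n-m}(A)\le\mu_i(B)\le\lambda_i(A)$ for $1\le i\le m$. That is exactly what your two Courant--Fischer arguments establish.

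The paper gives no proof of its own, only the citation. Your route is essentially the standard argument in the cited sources (Haemers, Horn--Johnson): Rayleigh quotients are preserved by the isometry $x\mapsto Sx$, and the dimension count $\dim(V\cap S\mathbb{R}^m)\ge i$ handles the second inequality. As you anticipated, the paper's only use of the lemma is $\mu_1(B)\le\lambda_1(A)$ in Lemma~\ref{lem:benchmark}, which is unaffected by the misprint.

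Two small tightenings. First, your $n=2$, $m=1$ example only shows that the index $i+n-m$ can fall out of range. To exhibit failure where the inequality is meaningful, take $n=3$, $m=2$, $A=\mathrm{diag}(1,0,0)$ and $S=(e_2\ e_3)$. Then $B=0$, so $\lambda_1(A)=1>0=\mu_2(B)$. Second, ``false whenever $m<n$'' should read ``not valid in general when $m<n$'', since trivial cases such as $A=0$ do satisfy it.
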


We next derive a lower bound on $\lambda(K_2\vee P_{n-2})$. Put
\[
 H_n:= K_2\vee P_{n-2}, \qquad r_n:= \frac{3 + \sqrt{8n-19}}{2}.
\]

\begin{lemma}\label{lem:benchmark}
For every $n\geq 10$, $\lambda (H_n) > r_n$.
\end{lemma}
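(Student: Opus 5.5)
The plan is to lower-bound $\lambda(H_n)$ by the largest eigenvalue of a $2\times 2$ compressed matrix, using Lemma~\ref{lem:interlace-inequality} with $m=2$ and $i=1$. Then compare that eigenvalue with $r_n$ through a quadratic.

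First note that $r_n$ is the largest root of
\[
x^2-3x-(2n-7)=0,
\]
since $9+4(2n-7)=8n-19$. Label $H_n$ as in Figure~\ref{fig:candidate}, with $U=\{u_1,u_2\}$ and $P=\{v_1,\dots,v_{n-2}\}$. Let $S$ be the $n\times 2$ matrix whose columns are $\mathbf 1_U/\sqrt2$ and $\mathbf 1_P/\sqrt{n-2}$, so $S^{\mathrm T}S=I$. Put $B=S^{\mathrm T}A(H_n)S$. Counting edges inside $U$, between $U$ and $P$, and inside $P$ gives
\[
B=\begin{pmatrix}1 & \sqrt{2(n-2)}\\ \sqrt{2(n-2)} & a\end{pmatrix},\qquad a=\frac{2(n-3)}{n-2}=2-\frac{2}{n-2}.
\]
Lemma~\ref{lem:interlace-inequality} yields $\lambda(H_n)\ge \mu_1(B)$. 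Hence it suffices to show that $f(r_n)<0$, where
\[
f(\mu)=\mu^2-(1+a)\mu+a-2(n-2)
\]
is the characteristic polynomial of $B$. Indeed, $f$ is an upward parabola, so $f(r_n)<0$ forces $\mu_1(B)>r_n$.

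Next substitute $r_n^2=3r_n+2n-7$ into $f$. A short computation gives
\[
f(r_n)=\frac{2r_n-n}{n-2}.
\]
So we need $3+\sqrt{8n-19}<n$, which is equivalent to $n^2-14n+28>0$. This holds for all $n\ge 12$, which settles those cases.

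The main obstacle is $n\in\{10,11\}$, where this crude test vector is too weak: it ignores that the path's endpoints have smaller degree, while interior vertices are well connected. For these two values I would first try a refined test vector. It puts weight $1$ on $u_1,u_2$, weight $y$ on $v_2,\dots,v_{n-3}$, and a smaller weight $z$ on $v_1,v_{n-2}$. Equivalently, one can use the $3$-column version of $S$ (normalized indicators of $U$, the two endpoints, and the interior vertices) and show that the largest eigenvalue of the resulting $3\times3$ matrix $B$ exceeds $r_n$. This reduces to evaluating a cubic characteristic polynomial at $r_{10}=(3+\sqrt{61})/2$ and $r_{11}=(3+\sqrt{69})/2$ and checking a sign.

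If that margin is still too thin, the fallback is a direct finite verification. For the two concrete graphs $H_{10}$ and $H_{11}$, evaluate the characteristic polynomial, or equivalently $\det(r_nI-A(H_n))$, and check that $\lambda(H_n)>r_n$ numerically. Here $\lambda(H_{10})\approx 5.5$ to $5.6$, against $r_{10}\approx 5.405$. A rigorous version of this check can be done by exhibiting an explicit rational positive vector $x$ with $A(H_n)x> r_n x$ entrywise, which bounds the spectral radius from below by the Collatz--Wielandt inequality.
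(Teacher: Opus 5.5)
\textbf{Verdict.} Your argument is correct for $n\ge 12$, by a genuinely different and simpler route than the paper's. For $n\in\{10,11\}$ it is not a proof as written.

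\textbf{The case $n\ge 12$.} Your $2\times 2$ compression onto $\{u_1,u_2\}$ and the whole path gives the clean identity $f(r_n)=(2r_n-n)/(n-2)$, and I checked that this is negative exactly when $n\ge 12$. The paper never uses a $2\times 2$ step. It compresses onto three classes (the two universal vertices, the two path endpoints, and the $q=n-4$ interior path vertices) and handles all $n\ge 10$ with one $3\times 3$ computation. Your route buys a slicker argument at the cost of a case split. The paper in fact only invokes this lemma for $n\ge 15$, so your bound alone would suffice for everything downstream.

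\textbf{The gap at $n\in\{10,11\}$.} You showed yourself that the $2\times 2$ test fails there, since $f(r_n)>0$. So these two cases carry real content, and you carry out neither the $3\times 3$ computation nor the fallback. Saying it ``reduces to checking a sign'' is not yet a proof, and your hedge (``if that margin is still too thin'') shows you did not know which way the sign goes. It does go the right way. With the paper's matrix $Q_n$ and the identity $r_n^2-3r_n=2q+1$, one gets
\[
q\det(r_nI-Q_n)=4q-4-r_n(q-2).
\]
This is negative iff $r_n>4+4/(q-2)$, that is, $r_{10}>5$ and $r_{11}>24/5$, both true. Since $\det(r_nI-Q_n)=\prod_i(r_n-\mu_i)$, a negative value forces an odd number, hence at least one, of the eigenvalues of $Q_n$ above $r_n$. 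Lemma~\ref{lem:interlace-inequality} then finishes the argument.

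\textbf{Your numerical estimate is wrong.} In fact $\lambda(H_{10})\approx 5.412$, not $5.5$ to $5.6$. Against $r_{10}\approx 5.405$, and with $\rho(Q_{10})\approx 5.411$, the eigenvalue margin is below $0.01$. A Collatz--Wielandt certificate would therefore need a quite accurate rational test vector. The exact determinant sign test above is the robust way to close these two cases.
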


\begin{proof}
Let the two universal vertices of $H_n$ be $u_1,u_2$, and let $v_1,v_2,\ldots,v_{n-2}$ 
be the path in its natural order (see Fig. \ref{fig:candidate}).  
Put $q=n-4$. Consider the three orthonormal vectors
\[
 f_1 = \frac{\bm{e}_{u_1} + \bm{e}_{u_2}}{\sqrt{2}}, \qquad
 f_2 = \frac{\bm{e}_{v_1} + \bm{e}_{v_{n-2}}}{\sqrt{2}}, \qquad
 f_3 = \frac{1}{\sqrt{q}} \sum_{i=2}^{n-3} \bm{e}_{v_i}.
\]
If $S=(f_1\ f_2\ f_3)$, then $S^{\mathrm{T}} S = I_3$. A direct computation of $Q_n = S^{\mathrm{T}} A(H_n) S$ yields
\[
Q_n =
\begin{bmatrix}
 1 & 2 & \sqrt{2q} \\
 2 & 0 & \sqrt{2/q} \\
 \sqrt{2q} & \sqrt{2/q} & 2(q-1)/q
\end{bmatrix}.
\]
By Lemma \ref{lem:interlace-inequality}, we find
$\lambda(H_n)\geq\rho(Q_n)$, where $\rho(Q_n)$ is the largest eigenvalue of $Q_n$.

Since $\mathrm{tr} (Q_n) > 0$, $Q_n$ has at least one positive eigenvalue. 
Also, the $2$-th leading principal minors of $Q_n$ is negative, we know $Q_n$
has at least one negative eigenvalue. Since the product of all three eigenvalues is positive, 
$Q_n$ has exactly one positive eigenvalue.

Expanding the determinant of $r_n I - Q_n$, and using the identity $r_n^2 - 3r_n = 2n-7=2q+1$, gives
\begin{align*}
q\cdot\det(r_nI - Q_n)
& = qr_n(r_n - 1)(r_n - 2) - 4qr_n - 2q^2 r_n + 2(r_n - 1)^2 - 8 \\
& = 4q - 4 - r_n(q - 2).
\end{align*}
Consequently, $\det (r_nI - Q_n) < 0$. 

The two negative eigenvalues of $Q_n$ contribute positive factors to
$\det(r_nI - Q_n)$. Hence $\det (r_nI - Q_n) < 0$ forces the unique positive eigenvalue of $Q_n$ to be
larger than $r_n$. Combining this with $\lambda(H_n) > \rho(Q_n)$ proves $\lambda(H_n) > r_n$.
\end{proof}

\section{Cubic vertices in an extremal triangulation}
\label{sec:series-lemmas}

Throughout the remainder of the paper, let $G$ be a planar graph of order $n\geq 15$ having 
maximum spectral radius, and put $\lambda := \lambda(G)$. Let $\bm{x} = (x_v)_{v\in V(G)}$ 
be a nonnegative Perron vector, normalized by $\|\bm{x}\|_1 = 1$. Since $H_n$ is planar, extremality and Lemma~\ref{lem:benchmark} imply $\lambda > r_n$.

Two consequences of $G$ being planar that we will use frequently are that $G$ has at most $3n-6$ edges and $G$ does not contain $K_{3,3}$ as a subgraph.
Since every planar graph on the same vertex set is contained in a maximal planar graph, 
it follows that $G$ is itself connected and edge-maximal. We therefore take a plane triangulation of 
$G$, which gives $e(G) = 3n - 6$.

Cubic vertices are essential to the extremal structure; accordingly, we establish 
some results concerning such vertices.

\begin{lemma}\label{lem:twoears}
The graph $G$ has at least two cubic vertices.
\end{lemma}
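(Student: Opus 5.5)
The plan is to argue by contradiction, using the degree-sequence bound of Theorem~\ref{thm:degree} with $j=n-1$ together with the benchmark $\lambda>r_n$ from Lemma~\ref{lem:benchmark}. Since $G$ is a plane triangulation on $n\geq 15$ vertices, it is connected, $e(G)=3n-6$, and its minimum degree is at least $3$. Write the degree sequence as $d_1\geq\cdots\geq d_n$, so $\sum_i d_i=6n-12$. Suppose $G$ has at most one cubic vertex. Then $d_n\geq 3$ and every other vertex has degree at least $4$, so $d:=d_{n-1}\geq 4$.

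Apply Theorem~\ref{thm:degree} with $j=n-1$. The sum in the bound is
\[
\sum_{i=1}^{n-2}(d_i-d)=6n-12-d_n-d-(n-2)d\leq 6n-15-(n-1)d,
\]
using $d_n\geq 3$. This gives
\[
\lambda\leq f(d):=\frac{d-1+\sqrt{(d+1)^2+4\bigl(6n-15-(n-1)d\bigr)}}{2}.
\]
Since the sum is nonnegative, $(n-1)d\leq 6n-15$, so $d<6$ and hence $d\in\{4,5\}$.

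\textbf{Case $d=4$.} The radicand is $25+24n-60-16n+16=8n-19$, so $f(4)=(3+\sqrt{8n-19})/2=r_n$ exactly. Thus $\lambda\leq r_n$, contradicting $\lambda>r_n$.

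\textbf{Case $d=5$.} The radicand is $36+24n-60-20n+20=4n-4$, so $f(5)=2+\sqrt{n-1}$. It remains to check $2+\sqrt{n-1}\leq r_n$, which is equivalent to $\sqrt{8n-19}\geq 1+2\sqrt{n-1}$. Squaring, this becomes $4n-16\geq 4\sqrt{n-1}$, which holds comfortably for $n\geq 15$. Again $\lambda\leq r_n$, a contradiction.

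Hence $G$ has at least two cubic vertices. The only real obstacle is choosing the index $j$ in Theorem~\ref{thm:degree} correctly. With $j=n-1$ and $d=4$, the bound lands exactly on $r_n$, which is precisely why the strict inequality of Lemma~\ref{lem:benchmark} is needed. Everything else is the short computation above, plus the observation that $d_{n-1}\leq 5$ follows from the nonnegativity of the sum.
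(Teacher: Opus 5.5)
Your proof is correct and takes essentially the paper's approach: the Liu--Weng bound at $j=n-1$, which forces $d_{n-1}\in\{4,5\}$ and gives $\lambda\le r_n$ (for $d=4$) or $\lambda\le 2+\sqrt{n-1}\le r_n$ (for $d=5$), contradicted by the strict inequality $\lambda>r_n$. The only difference is that you use $d_n\ge 3$ rather than $d_n=3$, which handles the ``no cubic vertex'' and ``exactly one cubic vertex'' cases together and makes the paper's separate $j=n$ step unnecessary.
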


\begin{proof}
We first show that $G$ has at least one cubic vertex. If not, the minimum degree is 
$\delta\geq 4$. Since a planar graph has a vertex of degree at most five, $\delta\in\{4,5\}$.
Applying Theorem \ref{thm:degree} with $j=n$, and using $\sum_i d_i = 6n - 12$, gives
\[
\lambda\leq\frac{\delta - 1 + \sqrt{(\delta + 1)^2 + 4(6n - 12 - n\delta)}}{2}.
\]
For $\delta=4$, the right-hand side of the above inequality becomes
$(3 + \sqrt{8n-23})/2$, which is strictly less than $r_n$.
For $\delta=5$, it becomes $2+\sqrt{n-3}$, which is no great than $r_n$. 
In both cases, the resulting bound contradicts $\lambda > r_n$. Thus a cubic vertex must exist.

It remains to exclude exactly one cubic vertex. In this case let $d=d_{n-1}$. Clearly, $d\geq 4$; 
moreover $d\leq 5$, since otherwise $\sum_vd(v)\geq 3 + 6(n-1) > 6n - 12$. Applying Theorem \ref{thm:degree} 
with $j=n-1$ gives 
\begin{align*}
2\lambda & \leq d-1 +\bigg(d+1)^2 + 4\sum_{i=1}^{n-2} (d_i - d)\bigg)^{1/2} \\
& = d-1 + \sqrt{(d-1)^2 + 4[6n-15-(n-2)d]}.
\end{align*}
Hence, $\lambda$ is bounded above by the positive root of
\begin{equation}\label{eq:f_dX}
 f_d(x) = x^2 - (d - 1) x - [6n - 15 - (n-2) d].
\end{equation}
Note that the quantity $6n - 15 - (n-2) d$ in \eqref{eq:f_dX} is positive for $d=4,5$, 
so $f_d(x)$ has one negative and one positive root. Using the identity $r_n^2-3r_n=2n-7$, we obtain
$f_d(r_n) = (d-4)(n-2-r_n)\geq 0$, since $r_n<n-2$. Thus $\lambda\leq r_n$, a contradiction.
\end{proof}

For convenience, we introduce the following notation throughout the remainder of the paper:
\[
\mathcal{S} = \{v\in V(G): d(v) = 3\}, \qquad k = |\mathcal{S}|.
\]
In a plane triangulation, the three neighbours of any cubic vertex $v$ form a triangle, 
which we refer to as the \emph{support triangle} of $v$. The following lemma captures the 
elementary planar configuration arising from such vertices.

\begin{lemma}\label{lem:cubic-core}
Let $T$ be any plane triangulation of $G$. Then the following conclusions hold:
\begin{enumerate}
\setlength{\itemsep}{-1pt}
\item[$(1)$] $\mathcal{S}$ is an independent set in $G$, and vertices in $\mathcal{S}$ have pairwise distinct support
triangles. 

\item[$(2)$] $T\setminus\mathcal{S}$ is still a triangulation on $n - k$ vertices, and
$2\leq k\leq\lfloor (2n-4)/3 \rfloor$.
\end{enumerate}
\end{lemma}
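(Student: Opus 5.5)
The plan is to work directly with the plane triangulation $T$ (which here coincides with $G$, since $G$ is edge-maximal with $3n-6$ edges) and use two standard facts. In a triangulation on at least $4$ vertices, the neighbourhood of any vertex induces a cycle, so a cubic vertex $v$ sees a triangle $abc$ whose three edges are edges of $G$. Moreover, the faces incident to $v$ are exactly $vab$, $vbc$, $vca$. Throughout we use $n\geq 15$.

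For $(1)$, I first prove independence. Suppose two cubic vertices $u,v$ are adjacent. The link of $v$ is a triangle $u a b$, so $u$ is adjacent to $v,a,b$, which are all three of its neighbours. Then $\{u,v,a,b\}$ is a set closed under adjacency for $u$ and $v$, and $a,b$ are adjacent to each other. Since $G$ is connected with $n\geq 15$, some vertex outside this set is adjacent to $a$ or $b$, so the set does not form a whole component. To exclude the configuration, I argue via separating triangles: the triangle $vab$ bounds a face on one side (containing $u$) only if $uab$ is a face. But the faces around $u$ are $uva$, $uvb$, $uab$, and together with those around $v$ they exhaust all faces at $a$ and $b$ in that region. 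This forces the whole graph to be $K_4$, contradicting $n\geq 15$.

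For distinct support triangles, suppose $u\neq v$ are cubic with the same neighbours $a,b,c$. Then $K_{3,2}$ plus the triangle $abc$ appears. The faces at $v$ are $vab$, $vbc$, $vca$, and likewise for $u$, so every edge $ab$, $bc$, $ca$ lies on two faces, one through $u$ and one through $v$. Hence the six faces around $u$ and $v$ close up a sphere, and $G=K_5^-$ on $5$ vertices, again contradicting $n\geq 15$. Alternatively, one can observe that $\{a,b,c\}$ would then be adjacent to $u$, $v$, and any third vertex in the remaining region, giving $K_{3,3}$. The face argument is cleaner.

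For $(2)$, I delete a single cubic vertex $v$ from a triangulation. The three faces $vab$, $vbc$, $vca$ merge into the single triangular face $abc$, so the result is again a triangulation. Edge count confirms this: we lose $3$ edges and $1$ vertex, and $3(n-1)-6=3n-9$. Because $\mathcal{S}$ is independent and the support triangles are pairwise distinct, the support triangle edges all remain after removing the other cubic vertices. Iterating the deletion over $\mathcal{S}$ therefore yields a triangulation on $n-k$ vertices, and each step keeps at least $4$ vertices since $n-k\geq n/3+4/3>3$ by the bound below.

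For the upper bound on $k$, each $v\in\mathcal{S}$ occupies a distinct face $abc$ of $T\setminus\mathcal{S}$, namely its support triangle, which becomes a face after deletion. Distinct vertices give distinct faces by $(1)$. A triangulation on $n-k$ vertices has $2(n-k)-4$ faces, so $k\leq 2(n-k)-4$, which gives $k\leq\lfloor(2n-4)/3\rfloor$. The lower bound $k\geq2$ is Lemma \ref{lem:twoears}.

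The main obstacle is making the independence and distinctness arguments rigorous without hand-waving about the embedding. I would use the fact that in a triangulation with $n\geq 5$, every triangle that is not a face is separating, and the link of every vertex is an induced cycle.
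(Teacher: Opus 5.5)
Your proposal is correct and is essentially the paper's proof. For $(1)$, the faces forced around the two offending cubic vertices close up into the whole sphere, giving $G=K_4$, resp.\ $G=K_5-uv$; for $(2)$, you delete the cubic vertices one at a time and compare the $k$ distinct support faces with the $2(n-k)-4$ faces of the core.

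Only minor cleanups are needed. First, your justification that each deletion step retains at least four vertices is circular, since it uses the bound on $k$ you are about to derive. It is also unnecessary: each $v\in\mathcal{S}$ is deleted while its three neighbours, which lie outside $\mathcal{S}$, are still present. Second, drop the $K_{3,3}$ aside: a third vertex need not be adjacent to all of $a,b,c$, and in fact no room for one remains. Third, drop the claim that every vertex link is an \emph{induced} cycle; it is false in general, though true for cubic vertices, which is all you use.
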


\begin{proof}
(1) Since the neighbors of a cubic vertex in $T$ form a triangle, if two cubic vertices 
$u,v$ were adjacent, then the two facial triangles incident with $uv$ would force four
vertices to induce $K_4$. Every face of that $K_4$ meets $u$ or
$v$, so inserting any further vertex into the triangulation would
increase the degree of at least one of them. This would force $n=4$, contrary to $n\geq 15$.

Similarly, if two nonadjacent cubic vertices $u,v$ had the same support triangle, the 
five vertices would form $K_5-uv$, with the two
cubic vertices on opposite sides of the common triangle. Every face of this
triangulation meets one of $u,v$, so no further vertex could be added
without increasing the degree of $u$ or $v$. This would force $n=5$, contrary to $n\geq 15$.

(2) Deleting a cubic vertex from a triangulation replaces its three incident facial triangles by
its support triangle, and the other cubic vertices retain degree three in the resulting graph, so the cubic vertices
may be deleted successively and the result is a triangulation $R_T$.
By (1), the $|\mathcal{S}|$ support triangles are distinct faces of $R_T$. Since
$|\mathcal{F}(R_T)| = 2(n - |\mathcal{S}|) - 4$, we have
$|\mathcal{S}|\leq 2(n - |\mathcal{S}|) - 4$,
which is equivalent to the asserted general upper bound. 
\end{proof}

Lemma \ref{lem:cubic-core} shows that after removing all cubic vertices from a plane triangulation 
of $G$ leaves a graph $R$ that is still a triangulation. We refer to this remaining graph $R$ as 
the \emph{core} of $G$. The faces of $R$ that arise from the deleted cubic vertices are called \emph{support faces}. 
A face of the core $R$ is said to be \emph{occupied} if it serves as the support face for some cubic vertex of $G$.

To translate the structure of cubic vertices into spectral information, we define
\[
c_v:= \max\{0,\, d(v) - 4\}, \quad E:= \sum_{v\in V(G)} c_vx_v, \quad 
L = \sum_{v\in\mathcal{S}} x_v, \quad r_v:= \sum_{w\notin N[v]} x_w.
\]

\begin{lemma}\label{lem:identities}
With the notation above, we have
\begin{equation}\label{eq:lambda-E-L}
\lambda = 4 + E - L \quad \text{and} \quad
\sum_{v\in V(G)} c_v = 2n - 12 + k.
\end{equation}
Moreover, for every vertex $v$, $r_v = 1 - (\lambda + 1) x_v$.
\end{lemma}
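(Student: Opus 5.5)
The plan is to sum the eigenvalue equations over all vertices, and then to count degrees; the third identity comes from a single eigenvalue equation.

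\emph{First identity.} Summing $\lambda x_v=\sum_{w\sim v}x_w$ over all $v$ and using $\|\bm{x}\|_1=1$ gives
\[
\lambda=\sum_{v} d(v)x_v .
\]
Next we split each degree according to its value. Since $G$ is a triangulation with $n\geq 15$, every vertex has degree at least $3$. Hence $d(v)=4+c_v$ whenever $d(v)\geq 4$, and $d(v)=3=4+c_v-1$ when $v\in\mathcal{S}$. In a single formula,
\[
d(v)=4+c_v-\mathbf{1}[v\in\mathcal{S}] .
\]
Multiplying by $x_v$ and summing gives
\[
\lambda=4\sum_v x_v+\sum_v c_vx_v-\sum_{v\in\mathcal{S}}x_v=4+E-L .
\]

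\emph{Second identity.} We sum the same relation over $v$ instead of weighting by $x_v$:
\[
\sum_v d(v)=4n+\sum_v c_v-k .
\]
Since $e(G)=3n-6$, the left side equals $6n-12$. Therefore
\[
\sum_v c_v=6n-12-4n+k=2n-12+k .
\]

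\emph{Third identity.} Fix a vertex $v$. The eigenvalue equation at $v$ gives $\sum_{w\in N(v)}x_w=\lambda x_v$. Therefore
\[
\sum_{w\in N[v]}x_w=(\lambda+1)x_v .
\]
Subtracting this from $\sum_w x_w=1$ yields $r_v=1-(\lambda+1)x_v$.

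\emph{Main obstacle.} There is essentially none. The only points needing care are that the minimum degree is at least $3$, which holds because $G$ is a triangulation on $n\geq 15$ vertices, and that the edge count is $3n-6$. Both facts were established above.
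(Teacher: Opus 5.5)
Your proof is correct and follows the paper's argument essentially step for step. You sum the eigenvalue equations to get $\lambda=\sum_v d(v)x_v$, write $d(v)=4+c_v$ for non-cubic vertices and $d(v)=4+c_v-1$ for cubic ones, use $2e(G)=6n-12$ to count $\sum_v c_v$, and read off $r_v$ from the eigenvalue equation at $v$. Your explicit remark that every vertex has degree at least $3$, which the paper uses only implicitly, is a welcome clarification.
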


\begin{proof}
For each vertex $v$, the eigenvalue--eigenvector equation at $v$ gives $\lambda x_v = \sum_{u\in N(v)} x_u$. 
Summing over all vertices and using $\|\bm{x}\|_1 = 1$ gives
\begin{equation}\label{eq:lambda-sum-degree}
 \lambda = \sum_{v\in V(G)} \sum_{u\in N(v)} x_u = \sum_{v\in V(G)} d(v)x_v.
\end{equation}
Since every non-cubic vertex has degree at least four and contributes
$(4 + c_v) x_v$ to the right-hand side of \eqref{eq:lambda-sum-degree}, while every cubic vertex 
contributes $3x_v=(4-1)x_v$, it follows from $\|\bm{x}\|_1 = 1$ that
\[
 \lambda = \sum_{v\in V(G)} d(v)x_v = \sum_{v\in \mathcal{S}} (4-1)x_v + \sum_{v\in V(G)\setminus\mathcal{S}} (4 + c_v) x_v
 = 4 - L + E.
\]
This proves the first identity in \eqref{eq:lambda-E-L}.

Since $e(G) = 3n - 6$, Euler's formula gives $\sum_v (d(v)-4) = 2e(G) - 4n = 2n - 12$. On the other hand,
\[
\sum_{v\in V(G)} (d(v)-4) = \sum_{v\notin \mathcal{S}} (d(v) - 4) + \sum_{v\in\mathcal{S}} (d(v) - 4) 
= \sum_{v\in V(G)} c_v - k, 
\]
proving the second identity in \eqref{eq:lambda-E-L}.

Finally, by $\|\bm{x}\|_1 = 1$ and the eigenvalue\,--\,eigenvector equation at $v$ give
\[
r_v = 1 - x_v - \sum_{w\in N(v)} x_w = 1 - (\lambda + 1) x_v,
\]
as desired. This completes the proof of Lemma \ref{lem:identities}.
\end{proof}

Relabel the vertices of $G$ as $v_1,v_2,\ldots,v_n$, write $x_i = x_{v_i}$, and
assume henceforth that $x_1\geq x_2\geq\cdots\geq x_n$.
Let $x^*:= \max_{v\in\mathcal{S}} x_v$. For $i\geq 1$, define the sequence $\{\alpha_i\}$ by 
\[
\alpha_1 = \alpha_2 = 0, \quad 
\alpha_i = \frac{2i - 5}{2i + 1} \quad (i\geq 3).
\]

\begin{lemma}\label{lem:cubic-x-lower-bound}
For any $v\in\mathcal{S}$ and any face $F$ of $G-v$, we have 
$\sum_{u\in F} x_u \leq\lambda x_v$.
\end{lemma}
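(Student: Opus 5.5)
This is a local switching argument that uses the extremality of $G$. Let $v\in\mathcal{S}$ have support triangle $a,b,c$, and let $F$ be any face of $G-v$ with boundary vertices $u_1,u_2,u_3$. Since $G$ is a plane triangulation, $G-v$ is a plane graph in which every face is a triangle. The one new face created by deleting $v$ is the support triangle $abc$; every other face of $G-v$ is a facial triangle of $G$ not containing $v$. We then form
\[
G' := (G-v) + vu_1 + vu_2 + vu_3,
\]
placing $v$ inside the face $F$ and joining it to the three boundary vertices. The graph $G'$ is planar and simple on the same $n$ vertices, and $v$ again has degree $3$. By the maximality of $\lambda$ among planar graphs of order $n$, we get $\lambda(G')\le\lambda$.

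Next we compare Rayleigh quotients using the Perron vector $\bm{x}$ of $G$. The edges of $G-v$ are common to both graphs, so
\[
\bm{x}^{\mathrm T}A(G')\bm{x}-\bm{x}^{\mathrm T}A(G)\bm{x}
=2x_v\Big(\sum_{u\in F}x_u-\sum_{w\in N_G(v)}x_w\Big)
=2x_v\Big(\sum_{u\in F}x_u-\lambda x_v\Big),
\]
where the last step is the eigen-equation at $v$. Suppose, for contradiction, that $\sum_{u\in F}x_u>\lambda x_v$. Since $G$ is connected, $x_v>0$, so the difference is positive. The Rayleigh principle then gives
\[
\lambda(G')\ \ge\ \frac{\bm{x}^{\mathrm T}A(G')\bm{x}}{\bm{x}^{\mathrm T}\bm{x}}\ >\ \frac{\bm{x}^{\mathrm T}A(G)\bm{x}}{\bm{x}^{\mathrm T}\bm{x}}=\lambda,
\]
which contradicts $\lambda(G')\le\lambda$. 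Hence $\sum_{u\in F}x_u\le\lambda x_v$.

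There is little real obstacle here. The only points needing care are these. First, $G'$ must be simple and planar; this holds because $v$ is reinserted into an actual face of the embedding of $G-v$ and its three boundary vertices are distinct. Second, $x_v>0$, which follows from the Perron--Frobenius theorem since $G$ is connected. Third, when $F$ is the support triangle itself, $G'=G$ and the inequality holds with equality. Notably, no uniqueness of the extremal graph is needed, only the weak inequality $\lambda(G')\le\lambda$.
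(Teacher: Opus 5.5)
Your proof is correct and matches the paper's argument: reinsert $v$ into the face $F$, compare Rayleigh quotients with the Perron vector $\bm{x}$ of $G$ to get $\lambda(G_F)-\lambda \ge \frac{2x_v}{\bm{x}^{\mathrm T}\bm{x}}\big(S(F)-\lambda x_v\big)$, and use extremality to force $S(F)\le\lambda x_v$. You also state explicitly that $x_v>0$ (needed to cancel the factor $x_v$) and that $G_F$ is simple and planar; the paper leaves both points implicit.
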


\begin{proof}
Let $v$ be a cubic vertex with support triangle $T$, and let $F$ be a face
of $G-v$. Draw $v$ inside the interior of $F$, and join it to the three vertices of
$F$, and call the resulting triangulation $G_F$. Let
$S(F) := \sum_{u\in F} x_u$. The Rayleigh principle implies that
\begin{align*}
\lambda (G_F) - \lambda
& \geq \frac{\bm{x}^{\mathrm{T}} A(G_F) \bm{x}}{\bm{x}^{\mathrm{T}} \bm{x}} - \frac{\bm{x}^{\mathrm{T}} A(G) \bm{x}}{\bm{x}^{\mathrm{T}} \bm{x}} \\
& = \frac{2x_v}{\bm{x}^{\mathrm{T}} \bm{x}} (S(F) - S(T))
= \frac{2x_v}{\bm{x}^{\mathrm{T}} \bm{x}} (S(F) - \lambda x_v)
\end{align*}
Since $G$ is extremal, $\lambda(G_F)\leq\lambda$. Hence
$S(F)\leq \lambda x_v$.
\end{proof}

Lemma \ref{lem:cubic-x-lower-bound} allows us to refine our understanding of how the Perron 
entries are distributed. With this in mind, for $x>0$, let 
\[
L_0(x):= \frac{k}{2n - k - 4 + 3k/x}, \quad 
c:= \frac{2}{3} + \frac{1}{\lambda}.
\]

\begin{lemma}\label{lem:relocation}
For every $v\in\mathcal{S}$, we have
\begin{equation}\label{eq:x-upper-bound}
cx^* \leq x_v \leq\frac{3}{\lambda(\lambda + 1)}.
\end{equation}
Moreover, the maximum entry $x^*$ satisfies
\begin{equation}\label{eq:M-upper-bound}
\frac{L}{k} \leq x^* \leq\frac{L}{1+(k-1)c}.
\end{equation}
\end{lemma}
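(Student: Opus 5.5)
The plan is to prove the two inequalities in \eqref{eq:x-upper-bound} separately and then obtain \eqref{eq:M-upper-bound} by summing over $\mathcal{S}$.

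\emph{Upper bound.} Fix $v\in\mathcal{S}$ with neighbours $a,b,c'$. Since $\lambda x_v = x_a+x_b+x_{c'}$, it suffices to bound each $x_u$ by $1/(\lambda+1)$. For any vertex $u$, Lemma~\ref{lem:identities} gives $r_u = 1-(\lambda+1)x_u \geq 0$, hence $x_u\leq 1/(\lambda+1)$. Summing over the three neighbours gives $\lambda x_v\leq 3/(\lambda+1)$, which is the claimed upper bound. This step is routine.

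\emph{Lower bound $x_v\geq c x^*$.} Let $w\in\mathcal{S}$ attain $x^*$, and let $T_w=\{a,b,c'\}$ be its support triangle. Take $v\in\mathcal{S}$ with $v\neq w$; the case $v=w$ is trivial since $c\le 1$ for $\lambda\geq 3$. The idea is to apply Lemma~\ref{lem:cubic-x-lower-bound} to $v$ with faces $F$ of $G-v$ that contain $w$. By Lemma~\ref{lem:cubic-core}(1), $w$ is not adjacent to $v$ and the support triangles are distinct, so in $G-v$ the vertex $w$ still has degree $3$ and lies on three faces $\{w,a,b\},\{w,b,c'\},\{w,a,c'\}$. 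For each of these faces, Lemma~\ref{lem:cubic-x-lower-bound} gives $\lambda x_v\geq x_w+(\text{sum of two of } x_a,x_b,x_{c'})$. Adding the three inequalities yields
\[
3\lambda x_v \geq 3x_w + 2(x_a+x_b+x_{c'}) = 3x^* + 2\lambda x^*,
\]
so $x_v\geq (2/3+1/\lambda)x^* = cx^*$. The only delicate point is to check that these three triangles really are faces of $G-v$. This needs $v\notin\{a,b,c'\}$, which holds by independence of $\mathcal{S}$. It also needs that the facial triangles around $w$ are not the ones destroyed by deleting $v$, which follows because the faces destroyed are those incident with $v$, and $w$ is not adjacent to $v$. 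I expect this face bookkeeping to be the only real obstacle, and it is handled by Lemma~\ref{lem:cubic-core}.

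\emph{The bounds on $x^*$.} First, $L=\sum_{v\in\mathcal{S}}x_v\le kx^*$ gives $L/k\le x^*$. Second, for the other inequality, separate out the maximiser $w$ and apply the lower bound to the remaining $k-1$ cubic vertices:
\[
L = x^* + \sum_{v\in\mathcal{S}\setminus\{w\}}x_v \geq x^*+(k-1)cx^*,
\]
which rearranges to $x^*\le L/(1+(k-1)c)$. This completes the plan.
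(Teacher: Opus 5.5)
Your proposal is correct and follows essentially the same argument as the paper. The upper bound comes from $x_u\le 1/(\lambda+1)$, which follows from $r_u\ge 0$. The lower bound comes from relocating $v$ into the three faces around the maximizing cubic vertex; the paper bounds the largest of the three face sums below by their average, which is the same as your summation. The bounds on $x^*$ then follow by summing over $\mathcal{S}$.
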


\begin{proof}
We first show that $x_v\geq cx^*$ for every $v\in\mathcal{S}$. The case $k = 1$ is trivial. 
Assume now that $k\geq 2$, and choose distinct cubic vertices $u,v$ with $x_u=x^*$. 
Write $N(u) = \{a,b,c\}$. By Lemma \ref{lem:cubic-core}, $u$ remains a cubic vertex of $G-v$, and
$uab,ubc,uca$ are faces of $G-v$. Moving $v$ inside the interior of the face among these 
three that maximizes the sum of the Perron entries, Lemma \ref{lem:cubic-x-lower-bound} yields
\begin{align*}
\lambda\, x_v
& \geq x_u + \max\{x_a + x_b, x_b + x_c, x_c + x_a\} \\
& \geq x_u + \frac{2}{3} (x_a + x_b + x_c)
= \Big(1 + \frac{2\lambda}{3}\Big) x_u.
\end{align*}
Thus $x_v\geq cx_u = cx^*$. Moreover, since $\lambda > r_n > 5$, we have $c < 1$, and 
therefore $x_v\geq cx^*$ for all cubic vertices. 

For the upper bound in \eqref{eq:x-upper-bound}, we observe that $r_w = 1 - (\lambda + 1) x_w \geq 0$ 
for each $w\in V(G)$, so $x_w\leq 1/(\lambda + 1)$. Now, using
the eigenvalue\,--\,eigenvector equation at a cubic vertex $v$, we get
\[
\lambda x_v = \sum_{w\in N(v)} x_w\leq\frac{3}{\lambda + 1},
\]
which proves the desired inequality.

Now we prove \eqref{eq:M-upper-bound}. The inequality $x^*\geq L/k$ follows because $x^*$ is their largest
coordinate. For the upper bound on $x^*$, we use the bound $x_v\geq cx^*$ for $v\in\mathcal{S}$, 
which implies $L\geq x^* + (k-1)cx^*$. 
\end{proof}

\begin{lemma}\label{lem:L-L0}
$L\geq L_0(\lambda)$.
\end{lemma}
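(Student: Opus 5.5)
The plan is to exploit Lemma~\ref{lem:cubic-x-lower-bound} in an averaged form. For a fixed cubic vertex $v$, I will sum over the faces of the core $R$ rather than over the faces of $G-v$. Each face is bounded either by $\lambda x_v$ or by the exact value $\lambda x_u$ for a cubic $u$. Summing the resulting inequality over all $v\in\mathcal{S}$ should then give exactly the bound $L\geq L_0(\lambda)$.

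\emph{Step 1: the total face sum of $R$.} By Lemma~\ref{lem:cubic-core}, $R$ is a triangulation on $n-k$ vertices, so it has $2(n-k)-4$ faces. For any triangulation, each vertex $u$ lies on exactly $d_R(u)$ faces. Hence
\[
\sum_{F\in\mathcal{F}(R)} S(F)=\sum_{u\notin\mathcal{S}} d_R(u)x_u .
\]
Deleting the cubic vertices removes, for each $u\in\mathcal{S}$, the term $3x_u$ and also one unit of degree from each of its three neighbours. The latter loss contributes $\sum_{w\in N(u)}x_w=\lambda x_u$. Combining this with $\sum_v d(v)x_v=\lambda$ from \eqref{eq:lambda-sum-degree}, and using that $\mathcal{S}$ is independent (Lemma~\ref{lem:cubic-core}), I get
\[
\sum_{F\in\mathcal{F}(R)} S(F)=\lambda-(\lambda+3)L .
\]

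\emph{Step 2: bound each face of $R$ for fixed $v\in\mathcal{S}$.} Lemma~\ref{lem:cubic-core} says the $k$ support faces are distinct. The $k-1$ faces occupied by cubic vertices $u\neq v$ have $S(F)=\lambda x_u$, so together they contribute $\lambda(L-x_v)$. Every other face of $R$ is a face of $G-v$: this covers both the unoccupied faces and the support face of $v$ itself. There are $2n-2k-4-(k-1)=2n-3k-3$ such faces, and Lemma~\ref{lem:cubic-x-lower-bound} bounds each of them by $\lambda x_v$. This yields
\[
\lambda-(\lambda+3)L\leq (2n-3k-3)\lambda x_v+\lambda(L-x_v).
\]

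\emph{Step 3: sum over $v\in\mathcal{S}$.} Summing Step~2 over all $v\in\mathcal{S}$ gives
\[
k\lambda-k(\lambda+3)L\leq (2n-3k-3)\lambda L+k\lambda L-\lambda L .
\]
Dividing by $\lambda$ and collecting terms, this becomes
\[
k\leq L\bigl(2n-k-4+3k/\lambda\bigr),
\]
which is exactly $L\geq L_0(\lambda)$.

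The main point requiring care is Step~2. Summing Lemma~\ref{lem:cubic-x-lower-bound} naively over all faces of $G-v$ only gives the weaker bound $L\geq k/(2n-5+3/\lambda)$. The improvement comes from treating the occupied faces of $R$ separately, using the exact identity $S(T_u)=\lambda x_u$ for them instead of the upper bound $\lambda x_v$. One must also check that the unoccupied faces of $R$, together with the support face of $v$, are genuinely faces of $G-v$, which holds because all other cubic vertices are still present there.
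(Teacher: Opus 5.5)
Your proof is correct and is essentially the paper's argument. The paper also uses the total face sum $\lambda-(\lambda+3)L$ of the core. It then shows that every unoccupied face $F$ of $R$ satisfies $S(F)\le S(T_i)=\lambda x_{u_i}$ for each cubic vertex $u_i$, and averages over the occupied faces. Your per-$v$ bound summed over $\mathcal{S}$ carries out exactly this averaging explicitly; the extra face $T_v$ you include is bounded by its own exact value $\lambda x_v$, so the two accountings coincide.
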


\begin{proof}
Let $T_1,\ldots,T_k$ be the support faces in the core $R$. If $F$ is
an unoccupied face of $R$, then $F$ is a face of $G-u_i$ for every
cubic vertex $u_i$. Relocating $u_i$ into $F$ and using Lemma \ref{lem:cubic-x-lower-bound} gives
$S(F)\leq S(T_i)$.
Thus every occupied support face has Perron sum at least every unoccupied
face. Their total Perron sum is
\[
\sum_{i=1}^k S(T_i) = \sum_{i=1}^k \lambda\, x_{u_i} = \lambda\, L.
\]

For the total face weight, let $h_w$ be the number of support triangles
incident with a core vertex $w$. Then $d_R(w)=d_G(w)-h_w$, and
\begin{align*}
\sum_{F\in\mathcal{F}(R)} S(F)
& = \sum_{w\in V(R)} d_R(w) x_w
= \sum_{w\in V(R)} d_G(w)x_w - \sum_{w\in V(R)} h_wx_w \\
& = (\lambda - 3L) - \sum_{i=1}^k S(T_i) = \lambda - (\lambda + 3) L.
\end{align*}
By the averaging argument, we have
\[
\frac{\lambda L}{k} \geq \frac{\lambda - (\lambda + 3) L}{2(n - k) - 4}.
\]
Multiplying out and solving for $L$ gives $L\geq L_0(\lambda)$.
\end{proof}

The next three lemmas focus on the constraint relations among the three largest Perron entries. 
To simplify the presentation, we write $s: = x_1 + x_2$, and $z:= x_3$.

\begin{lemma}\label{lem:constraint}
$(\lambda + 1)s\leq 2 - \alpha_k L$.
\end{lemma}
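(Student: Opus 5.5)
The plan is to rewrite the left-hand side using the identity $r_v = 1-(\lambda+1)x_v$ from Lemma~\ref{lem:identities}. Applying it to $v_1$ and $v_2$ and adding gives
\[
(\lambda+1)s = 2 - r_{v_1} - r_{v_2},
\]
so the claim is equivalent to $r_{v_1}+r_{v_2}\geq \alpha_k L$. For $k\leq 2$ we have $\alpha_k=0$, and the inequality holds because each $r_v$ is a sum of nonnegative entries. So assume $k\geq 3$. The idea is that a cubic vertex lying outside $N[v_1]\cap N[v_2]$ contributes its entry to at least one of $r_{v_1}, r_{v_2}$, and planarity allows only very few cubic vertices inside $N[v_1]\cap N[v_2]$.

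The key combinatorial step is to show that at most two cubic vertices lie in $N[v_1]\cap N[v_2]$.

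First suppose $u\in\mathcal{S}$ with $u\neq v_1,v_2$ is adjacent to both $v_1$ and $v_2$. Then $v_1,v_2$ lie on the support triangle of $u$, so $v_1v_2$ is an edge of the core $R$ and the support face of $u$ contains this edge. An edge of the triangulation $R$ lies on exactly two faces. By Lemma~\ref{lem:cubic-core}(1), distinct cubic vertices have distinct support faces, so there are at most two such $u$.

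Now suppose one of $v_1,v_2$, say $v_1$, is itself cubic and lies in $N[v_2]$. Then $v_1$ is adjacent to $v_2$. Since $\mathcal{S}$ is independent (Lemma~\ref{lem:cubic-core}(1)), no other cubic vertex is adjacent to $v_1$, so no other cubic vertex lies in $N[v_1]\cap N[v_2]$. In this case the count is at most one.

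In every case, then, $|\mathcal{S}\cap N[v_1]\cap N[v_2]|\leq 2$. Each cubic vertex outside this set is counted in $r_{v_1}$ or in $r_{v_2}$, and all entries are nonnegative. Hence
\[
r_{v_1}+r_{v_2}\geq L-2x^*.
\]

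To finish, I will bound $x^*$ using Lemma~\ref{lem:relocation}, which gives $x^*\leq L/(1+(k-1)c)$ with $c=2/3+1/\lambda\geq 2/3$. It therefore suffices to check
\[
1-\frac{2}{1+(k-1)c}\geq \frac{2k-5}{2k+1},
\]
which is equivalent to $2/(1+(k-1)c)\leq 6/(2k+1)$, i.e.\ $2(k-1)\leq 3(k-1)c$. This holds because $c\geq 2/3$.

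I expect the main obstacle to be the counting step. In particular, the degenerate situations need care: $v_1$ or $v_2$ being cubic, or $v_1,v_2$ being nonadjacent (in which case no cubic vertex is adjacent to both). Once that count is established, the inequality follows directly from Lemma~\ref{lem:relocation}.
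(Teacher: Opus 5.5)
Your proposal is correct and follows the paper's proof essentially step for step: you reduce to $r_{v_1}+r_{v_2}\ge \alpha_k L$ via Lemma~\ref{lem:identities}, show that at most two cubic vertices lie in $N[v_1]\cap N[v_2]$, and finish with $2x^*\le 6L/(2k+1)$ from Lemma~\ref{lem:relocation} and $c>2/3$. The only cosmetic difference is in the counting step: the paper counts the faces of $G$ through the edge $v_1v_2$, while you count the equivalent support faces of the core, which are distinct by Lemma~\ref{lem:cubic-core}; your handling of the degenerate cases and of the final algebra is slightly more explicit than the paper's.
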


\begin{proof}
First, applying Lemma \ref{lem:identities} to $v_1$ and $v_2$ gives
\[
r_{v_1} + r_{v_2} = \big(1 - (\lambda + 1) x_1\big) + \big(1 - (\lambda + 1) x_2\big) 
= 2 - (\lambda + 1) s.
\]
Therefore it suffices to show that $r_{v_1} + r_{v_2} \geq \alpha_k L$.
Consider the contribution of the cubic vertices to $r_{v_1} + r_{v_2}$.
By the definition of $r_v$, a cubic vertex contributes nothing to $r_{v_1}+r_{v_2}$ exactly when it belongs to
$N[v_1]\cap N[v_2]$. If $v_1v_2\notin E(G)$, no cubic vertex can lie in this
intersection because the neighbors of any cubic vertex form a triangle. If
$v_1v_2\in E(G)$, then every cubic vertex distinct from $v_1,v_2$
that belongs to $N[v_1]\cap N[v_2]$ must be the third vertex of a face containing 
the edge $v_1v_2$. Since each edge of a plane triangulation is incident with exactly 
two faces, there are at most two such cubic vertices. The same conclusion holds if one of
$v_1,v_2$ is itself a cubic vertex, by independence of $\mathcal{S}$. Hence the sum of 
the Perron entries of cubic vertices that contribute nothing to $r_{v_1} + r_{v_2}$ is 
at most $2x^*\leq 6L/(2k + 1)$, where we used \eqref{eq:M-upper-bound} and $c>2/3$. 
Every other cubic vertex is counted at least once in $r_{v_1} + r_{v_2}$, so
\[
r_{v_1} + r_{v_2}\geq L - \frac{6L}{2k+1} = \alpha_k L
\]
for $k\geq 3$; for $k=1,2$, we use the trivial lower bound zero.
\end{proof}

Let $U:=\{v_1,v_2,v_3\}$ denote the set of vertices corresponding to the three largest Perron entries.
For each \(w\in V(G)\setminus U\), let $m_w:= \big|\{v\in U: w\notin N[v_i]\}\big|$.

\begin{lemma}\label{lem:topthree}
$\lambda s + (\lambda - 2) z\leq 2$.
\end{lemma}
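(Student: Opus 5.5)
The plan is to prove the inequality by double counting edges between $U=\{v_1,v_2,v_3\}$ and the rest of $G$, weighted by Perron entries. The only planarity input needed is that $G$ contains no $K_{3,3}$. The normalization $\|\bm{x}\|_1=1$ and the eigenvalue equations convert the count into the stated bound.

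First, I sum the eigenvalue equations at $v_1,v_2,v_3$:
\[
\lambda(s+z)=\lambda(x_1+x_2+x_3)=\sum_{i=1}^{3}\sum_{w\in N(v_i)} x_w=\sum_{w\in V(G)} |N(w)\cap U|\, x_w .
\]
I then bound the multiplicity $|N(w)\cap U|$ in two cases.

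\emph{Case $w\in U$.} Here $w$ can be adjacent only to the other two members of $U$, so $|N(w)\cap U|\leq 2$.

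\emph{Case $w\notin U$.} Trivially $|N(w)\cap U|\leq 3$. Suppose three distinct vertices outside $U$ were each adjacent to all of $v_1,v_2,v_3$. Together with $U$ they would form a $K_{3,3}$ subgraph, which planarity forbids. Hence at most two vertices $a,b\notin U$ have $|N(w)\cap U|=3$, and every other vertex outside $U$ has $|N(w)\cap U|\leq 2$.

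Putting the cases together and using $\|\bm{x}\|_1=1$ gives
\[
\lambda(s+z)\leq 2\sum_{w\in V(G)} x_w + (x_a+x_b) = 2 + x_a + x_b .
\]
If fewer than two such vertices exist, the missing terms are simply absent. Since $a,b\notin U$ and $U$ consists of the three vertices with the largest Perron entries, we have $x_a,x_b\leq x_3=z$. Therefore $\lambda(s+z)\leq 2+2z$, which rearranges to $\lambda s+(\lambda-2)z\leq 2$.

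The only step needing any care is the $K_{3,3}$ count, namely that at most two outside vertices see all of $U$. This is immediate from planarity, so I expect no real obstacle. The argument is exactly the three-vertex analogue of the $r_{v_1}+r_{v_2}$ computation in Lemma~\ref{lem:constraint}. Equivalently, in terms of the quantities $m_w$, it says $\sum_{w\notin U} m_w x_w\geq (1-s-z)-2z$.
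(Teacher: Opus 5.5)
Your proof is correct and is essentially the paper's argument. The paper sums $r_{v_i}=1-(\lambda+1)x_{v_i}$ over $U$ and uses the same $K_{3,3}$ observation to get $\sum_{w\notin U} m_w x_w \geq 1-s-3z$. That is exactly the complement of your direct count $\lambda(s+z)=\sum_w |N(w)\cap U|\,x_w\leq 2+2z$, and the paper's step of discarding the terms from $U$ in $r_{v_i}$ is your bound $|N(w)\cap U|\leq 2$ for $w\in U$.
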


\begin{proof}
We first estimate $r_{v_1}+r_{v_2}+r_{v_3}$. Recall that for any vertex $v$, 
\[
r_v = 1 - x_v - \sum_{w\in N(v)} x_w = \sum_{w\notin N[v]} x_w.
\]
Hence, the contribution of $w\in V(G)\setminus U$ to $r_{v_1}+r_{v_2}+r_{v_3}$ is precisely $m_wx_w$.
There are at most two vertices in $V(G)\setminus U$ adjacent to all
three of $v_1,v_2,v_3$, since three such vertices together with $v_1,v_2,v_3$ would form a $K_{3,3}$, 
contrary to planarity. Consequently, apart from at most two exceptional vertices, every
$w\in V(G)\setminus U$ is nonadjacent to at least one of $v_1,v_2,v_3$, so $m_w\geq 1$. 
Since $w\notin U$ implies $x_w\leq x_3 = z$, the total sum of Perron entries of the exceptional 
vertices is at most $2z$. We obtain
\begin{align*}
r_{v_1} + r_{v_2} + r_{v_3}
& \geq \sum_{w\in V(G)\setminus U} m_wx_w
\geq \sum_{w\in V(G)\setminus U} x_w - 2z \\
& = 1 - x_1 - x_2 - x_3 - 2z = 1 - s - 3z.
\end{align*}

On the other hand, Lemma \ref{lem:identities} applied to $v_1,v_2,v_3$ gives
\begin{align*}
r_{v_1} + r_{v_2} + r_{v_3}
& = 3 - (\lambda + 1)(x_1 + x_2 + x_3)
= 3 - (\lambda + 1)(s + z).
\end{align*}
Combining this with the previous lower bound yields $3 - (\lambda + 1)(s + z)\geq 1 - s - 3z$, 
which, after rearrangement, becomes $\lambda s + (\lambda - 2)z\leq 2$.
\end{proof}

The following result refines Lemma \ref{lem:topthree} for large $k$.

\begin{lemma}\label{lem:weighted0}
If $k\geq 7$, then $\lambda s + (\lambda - 2) z < 2 - \gamma_k L$, where 
$\gamma_k = \frac{2(k - 6)}{3k}$.
\end{lemma}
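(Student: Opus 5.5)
The plan is to rerun the proof of Lemma~\ref{lem:topthree} and gain an extra term from the cubic vertices. As there, the eigenvalue equation gives
\[
r_{v_1}+r_{v_2}+r_{v_3}=3-(\lambda+1)(s+z),
\]
and the left side equals $\sum_{w\notin U} m_w x_w$. The claim $\lambda s+(\lambda-2)z<2-\gamma_k L$ is then equivalent to
\[
\sum_{w\notin U} m_wx_w > 1-s-3z+\gamma_k L .
\]
In Lemma~\ref{lem:topthree} we only used $m_w\geq 1$, apart from at most two vertices with $m_w=0$, each of weight at most $z$. The improvement will come from showing that most cubic vertices outside $U$ have $m_w\geq 2$. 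Such a vertex contributes at least $2x_w$ rather than $x_w$, giving an extra $x_w$ per vertex.

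\textbf{Step 1 (few bad cubic vertices).} Call a cubic vertex \emph{bad} if it lies in $U$ or is adjacent to at least two vertices of $U$. Take a cubic $w\notin U$ adjacent to $v_i,v_j\in U$. Since $N(w)$ is a triangle, $v_iv_j\in E(G)$, and $wv_iv_j$ is a face of the triangulation. Each of the at most three edges inside $U$ lies on two faces, so there are at most six such $w$.

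We also need cubic vertices inside $U$ to fit within this count. If $v_i\in U$ is cubic, then by Lemma~\ref{lem:cubic-core}(1) the third vertex of any face on an edge $v_iv_j$ is a neighbour of $v_i$ and hence not cubic. So each cubic vertex of $U$ uses up the slots of the edges at it. A short case check (according to how many of $v_1,v_2,v_3$ are cubic and which edges of $U$ are present) should show there are at most six bad cubic vertices in total. Every non-bad cubic vertex $w\notin U$ has $m_w\geq 2$.

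\textbf{Step 2 (accounting).} Split $\sum_{w\notin U} m_w x_w$ as before. The non-cubic and bad vertices are handled exactly as in Lemma~\ref{lem:topthree}: weight at least $x_w$ except for at most two vertices of weight at most $z$ each. Each non-bad cubic vertex supplies an additional $x_w$. Hence
\[
\sum_{w\notin U} m_wx_w\geq 1-s-3z+L_{\rm good},
\]
where $L_{\rm good}$ is the total Perron weight of the non-bad cubic vertices. It remains to show $L_{\rm good}>\gamma_k L$.

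\textbf{Step 3 (comparing $L_{\rm good}$ with $L$).} Write $L=L_{\rm bad}+L_{\rm good}$. By Step~1, $L_{\rm bad}\leq 6x^*$. By Lemma~\ref{lem:relocation}, each of the at least $k-6$ good cubic vertices has $x_v\geq cx^*>\tfrac23 x^*$, so $L_{\rm good}>\tfrac{2(k-6)}{3}x^*$. The function $t\mapsto t/(t+a)$ is increasing, so
\[
\frac{L_{\rm good}}{L}\geq \frac{L_{\rm good}}{L_{\rm good}+6x^*} > \frac{2(k-6)/3}{2(k-6)/3+6}=\frac{2(k-6)}{2k+6}.
\]
For $k>6$ this exceeds $\frac{2(k-6)}{3k}=\gamma_k$, since $3k>2k+6$. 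Strictness uses $c>2/3$, $x^*>0$ and $k\geq 7$.

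\textbf{Main obstacle.} The delicate step is Step~1, the uniform bound of six bad cubic vertices when some of $v_1,v_2,v_3$ are themselves cubic, together with checking that the two $m_w=0$ exceptions are not double-counted against the cubic gain. A cubic vertex adjacent to all of $U$ is both bad and possibly exceptional, and I only credit good vertices, so this should be harmless. There is ample slack between $\frac{2(k-6)}{2k+6}$ and $\gamma_k$. So if the count of bad vertices turns out slightly weaker than six, the argument can still be salvaged by adjusting constants only if the count stays at six; $\gamma_k$ is tuned exactly to six.
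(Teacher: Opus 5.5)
Your proof is correct and is essentially the paper's argument. Both proofs lower-bound $r_{v_1}+r_{v_2}+r_{v_3}$ by $\sum_{w\notin U}m_wx_w$, credit an extra $x_w$ to cubic vertices nonadjacent to two vertices of $U$, cap the exceptions at six via the two faces on each edge of $G[U]$, and finish with $x_v\geq cx^*$. Your case check does close: if $U$ contains a cubic vertex, there are at most three bad cubic vertices. Your bad-vertex bookkeeping just sidesteps the paper's $h$-analysis of a cubic vertex with $m_w=0$, and your ratio bound replaces $x^*\geq L/k$.

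One small fix: in general only $r_{v_1}+r_{v_2}+r_{v_3}\geq\sum_{w\notin U}m_wx_w$ holds, since nonadjacent pairs inside $U$ also contribute. So your reformulation is a sufficient condition rather than an equivalence, which is all the argument needs.
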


\begin{proof}
Let $X := \sum_{w\notin U} (m_w - 1) x_w$. We begin with the following estimate:
\begin{equation}\label{eq:temporary-ineq-1}
\sum_{w\in\mathcal{S}\setminus U} (m_w - 1)\geq k - 6.
\end{equation}
Suppose first that none of $v_1,v_2,v_3$ is a cubic vertex. For
$j\in\{0,1,2,3\}$, let $f_j$ denote the number of the $k$ selected
support faces of the core $R$ that meet $U$ in exactly $j$
vertices. Count pairs $(F,ab)$, where $F$ is a selected support face
and $ab$ is an edge of $R[U]$ contained in $F$. A face counted by
$f_2$ contributes one pair and a face counted by $f_3$ contributes
three. Since every edge of a plane triangulation lies in exactly two facial
triangles, and $e(R[U])\leq 3$, we have $f_2 + 3f_3\leq 2e(R[U])\leq 6$. Now consider
a cubic vertex $v$ whose support face meets $U$ in $j$ vertices. Then $m_v = 3 - j$, 
and therefore $m_v - 1 = 2 - j$. Hence, the cubic vertices corresponding to faces 
counted by $f_0,f_1,f_2,f_3$ contribute respectively $2,1,0,-1$ to $\sum_{v\in\mathcal{S}} (m_v - 1)$. 
Thus, $\sum_{v\in\mathcal{S}} (m_v - 1) = 2f_0 + f_1 - f_3$. Since $k = f_0 + f_1 + f_2 + f_3$, we obtain
\[
\sum_{v\in\mathcal{S}} (m_v - 1) - (k - 6) = f_0 - f_2 - 2f_3 + 6 \geq f_0 + f_3\geq 0,
\]
where the first inequality follows from $f_2 + 3f_3\leq 2e(R[U])\leq 6$.

It remains to consider the case in which $U$ contains at least one cubic vertex. If $U$ contains 
exactly one cubic vertex $w$, then every other cubic vertex is non-adjacent to $w$ by 
Lemma \ref{lem:cubic-core}. At most two support faces contain the other two vertices of
$U$, because an edge of $R$ is incident with at most two faces.
Thus all but at most two of the remaining $k - 1$ cubic vertices contribute at
least one to the sum in \eqref{eq:temporary-ineq-1}. Thus, among the $k - 1$ cubic vertices 
in $\mathcal{S}\setminus U$: at most two have $m_v = 1$, contributing $m_v - 1 = 0$; every other cubic vertices 
that not adjacent to $w$ and at least one of $U\setminus\{w\}$, so $m_v\geq 2$ and contributes $m_v - 1\geq 1$. 
Therefore $\sum_{v\in\mathcal{S}\setminus U} (m_v - 1)\geq (k - 1) - 2 = k - 3$. 
If $U$ contains exactly two cubic vertices, by Lemma \ref{lem:cubic-core}, every other cubic 
vertices are not adjacent to both of them, so the sum in \eqref{eq:temporary-ineq-1} is at least $k - 2$. 
If all vertices in $U$ are cubic vertices, then the sum in \eqref{eq:temporary-ineq-1} is at least $2(k-3)$. 
This finish the proof of \eqref{eq:temporary-ineq-1}.

There are at most two vertices outside $U$ with $m_w=0$, for three
such vertices together with $U$ would span a $K_{3,3}$. 
We claim that at most one of them is a cubic vertex. Indeed, if there two cubic vertex outside 
$U$ with $m_w=0$, then both of them share a common support triangle formed by $U$,  
contradicting Lemma \ref{lem:cubic-core}. Let $h\in\{0,1\}$ be the
number of cubic vertices outside $U$ with $m_w=0$, and let
\[
P := \sum_{v\in\mathcal{S}\setminus U,~ m_v\geq 2} (m_v - 1).
\]
A cubic vertex outside $U$ with $m_w=0$, when present, contributes $-1$ to the 
left-hand side of \eqref{eq:temporary-ineq-1}, while cubic vertices with $m_w = 1$ contribute zero. 
Hence $P\geq k-6+h$. It follows from $x_w\geq cx^*$ ($w\in\mathcal{S}$) that
\begin{equation}\label{eq:X-lower-bound}
 X = \sum_{w\notin U,~m_w\geq 2} (m_w - 1)x_w + \sum_{w\notin U,~ m_w = 0} (m_w - 1) 
\geq cx^*(k - 6 + h) - hx^* - (2 - h)z.
\end{equation}
For $h=0$, the right-hand side of \eqref{eq:X-lower-bound} is $cx^*(k-6)-2z$. If $h=1$, 
the unique cubic vertex outside $U$ with $m_w=0$ is adjacent to all three vertices of $U$. 
Hence none of $v_1,v_2,v_3$ is a cubic vertex, and so $x^*\leq z$. Since $c<1$, we have 
$(c - 1)x^* + z\geq (c - 1)z + z = cz > 0$, and
\eqref{eq:X-lower-bound} again yields $X\geq cx^*(k-6)-2z$.

Since $c = 2/3 + 1/\lambda > 2/3$, and since $x^*\geq L/k$ by Lemma \ref{lem:relocation}, we deduce
\[
X > -2z + \frac{2(k-6)}{3k} L = -2z + \gamma_k L.
\]
Recall that for a vertex $w\notin U$, its total contribution to
$r_{v_1}+r_{v_2}+r_{v_3}$ is $m_wx_w$. Therefore
\[
r_{v_1}+r_{v_2}+r_{v_3}\geq \sum_{w\notin U} m_wx_w
= 1 - s - z + X > 1 - s - 3z + \gamma_k L.
\]
On the other hand, Lemma \ref{lem:identities} gives $r_{v_1} + r_{v_2} + r_{v_3} = 3 - (\lambda + 1)(s + z)$.
Comparing this identity with above inequality and rearranging proves the desired result.
\end{proof}

\section{Forcing a universal vertex}
\label{sec:first-universal-vertex}

The aim of this section is to show that in the extremal graph $G$ there is a universal vertex, 
i.e., a degree of $n-1$. The proof of it by contradiction. We will assume that no vertex is universal 
and derive a contradiction. A series of lemmas in Section \ref{sec:series-lemmas}
reduces the problem to a two-variable linear optimization in $s=x_1+x_2$ and $z=x_3$.

\begin{lemma}\label{lem:normalized-first}
For every integer $2\leq k\le(2n-4)/3$, 
we have $(n-6)s + kz - L_0(r_n) < r_n - 4$.
\end{lemma}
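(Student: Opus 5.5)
The plan is to read the statement as the assertion that the linear functional $\Phi(s,z):=(n-6)s+kz$ stays below $r_n-4+L_0(r_n)$ on the region cut out by the constraints of Section~\ref{sec:series-lemmas}, with $\lambda$ replaced by $r_n$ and $L$ by $L_0(r_n)$. First I justify this replacement. Each constraint has the form $\lambda\cdot(\text{nonnegative combination of } s,z)\le 2-(\text{nonnegative coefficient})\cdot L$. Since $\lambda>r_n$, $L\ge L_0(\lambda)$ by Lemma~\ref{lem:L-L0}, and $L_0(x)$ is increasing in $x$, the constraints remain valid after substituting $r_n$ for $\lambda$ and $L_0:=L_0(r_n)$ for $L$. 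The feasible region then becomes
\[
0\le z\le s/2,\qquad (r_n+1)s\le 2-\alpha_k L_0,\qquad r_n s+(r_n-2)z\le 2-\gamma_k L_0 ,
\]
where the last term $\gamma_kL_0$ is present only when $k\ge 7$ (Lemma~\ref{lem:weighted0}); for $k\le 6$ I use Lemma~\ref{lem:topthree} instead. This region is a polygon, so it suffices to check $\Phi$ at its vertices. Throughout I will use $r_n^2-3r_n=2n-7$, i.e.\ $2(n-6)=r_n^2-3r_n-5$.

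The key computation is at the vertex $s_0=(2-\alpha_kL_0)/(r_n+1)$. From
\[
\frac{2(n-6)}{r_n+1}=\frac{r_n^2-3r_n-5}{r_n+1}=r_n-4-\frac{1}{r_n+1},
\]
the $s$-part alone leaves a margin of $\frac{1}{r_n+1}+\frac{(n-6)\alpha_kL_0}{r_n+1}$. This margin must absorb the $z$-contribution $kz$, where at this vertex
\[
z=\frac{2-\gamma_kL_0-r_ns_0}{r_n-2}\approx \frac{2+r_n\alpha_kL_0}{(r_n+1)(r_n-2)}.
\]
The other vertices are $(s,z)=(0,0)$, which is trivial, and the point where $s=2z$ meets the top-three constraint, where $z=(2-\gamma_kL_0)/(3r_n-2)$. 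At the latter, $\Phi\approx(2(n-6)+k)\cdot 2/(3r_n-2)\approx \tfrac{2}{3}r_n$, which is comfortably below $r_n-4$ for $n\ge 15$. So the only delicate vertex is the intersection of the two nontrivial lines.

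At that vertex I split on $k$. For $k\le 6$, $\alpha_k\ge 0$ and $kz\le 6\cdot\frac{2}{(r_n+1)(r_n-2)}$. I compare this against $\frac{1}{r_n+1}$ plus the $\alpha_k$-gain and $L_0$, using $r_n\ge (3+\sqrt{101})/2\approx 6.5$. Small $k$ may need the explicit $L_0=k/(2n-k-4+3k/r_n)$ to close the gap. For $k\ge 7$ the term $kz$ grows linearly in $k$. Here the point is that $(n-6)\alpha_kL_0/(r_n+1)$ and $k\gamma_kL_0/(r_n-2)$ are both of order $kL_0\cdot n/r_n\approx k^2/r_n$ times constants, and so they dominate the $2k/r_n^2$ growth. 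I would expand everything as an explicit rational function in $r_n$ and $k$ (eliminating $n$ via $n=(r_n^2-3r_n+7)/2$) and check positivity of the difference over $7\le k\le (2n-4)/3$. Monotonicity in $k$ should reduce this to checking the endpoints.

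The main obstacle is this last verification: the bookkeeping with $\alpha_k$, $\gamma_k$ and $L_0(r_n)$ in the middle range of $k$ (around $3\le k\le 10$), where the gains from $L_0$ are weakest. If a direct monotonicity argument fails there, I would treat the finitely many small values of $k$ separately. For each fixed $k$ the inequality reduces to a polynomial inequality in $r_n$ with $r_n\ge 6.5$.
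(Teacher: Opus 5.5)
Your reduction is the paper's. You pass to $t=r_n$ and $L_0=L_0(t)$ in Lemmas~\ref{lem:constraint}, \ref{lem:topthree} and \ref{lem:weighted0}, add $z\le s/2$, and maximize $\Phi=(n-6)s+kz$ over the resulting polygon. That polygon has two nontrivial vertices: $P_1=(s_0,s_0/2)$ with $s_0=2(2-\gamma_kL_0)/(3t-2)$, and $P_2$ on the line $s=(2-\alpha_kL_0)/(t+1)$. The paper's case split on the sign of $n-6-kt/(t-2)$ is exactly the question of which of the two maximizes $\Phi$.

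The gap is that you dismiss $P_1$ with a false estimate. The inequality $\tfrac{2}{3}t<t-4$ needs $t>12$, i.e.\ $n\ge 58$. At $n=15$ one has $t\approx 6.525$, so $\tfrac{2}{3}t\approx 4.35$ while $t-4\approx 2.52$. Moreover, $P_1$ is precisely the maximizing vertex whenever $n-6-kt/(t-2)<0$. This happens for small $n$ and large $k$ (e.g.\ $n=15$, $k\in\{7,8\}$), and there the inequality is tight. For $n=15$, $k=7$:
\begin{itemize}
\item $n-6-kt/(t-2)\approx -1.09$, so $P_1$ is the maximizer;
\item $\Phi(P_1)=25(2-\gamma_7L_0)/(3t-2)\approx 2.802$, against the target $t-4+L_0\approx 2.840$;
\item your cruder value $(2(n-6)+k)\cdot 2/(3t-2)\approx 2.845$ already exceeds the target.
\end{itemize}
So at $P_1$ you must keep both the $\gamma_kL_0$ gain and the $-L_0$ term, and prove the bound uniformly in $k$. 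The paper does this in its Case~4: it sets $\theta=k/(2n-4)$, uses $2n-4=t^2-3t+3$, and shows the numerator is a concave quadratic in $\theta$ with negative discriminant.

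Your treatment of $P_2$ also needs repair. For $3\le k\le 6$, the bound $kz\le 12/((t+1)(t-2))$ is false. At $P_2$ one has $z=(2+t\alpha_kL_0)/((t+1)(t-2))$, which carries a positive $\alpha_k$-term; at $n=15$, $k=6$, $kz\approx 0.516$, not $\le 0.352$. The correct bookkeeping is that the $\alpha_k$-terms from $s$ and $z$ combine to $-\alpha_kL_0\,(n-6-kt/(t-2))/(t+1)$. This is $\le 0$ exactly when $P_2$ is the maximizer. What then remains to be shown negative is $\frac{2k-(t-2)}{(t-2)(t+1)}-L_0$, minus an additional $\frac{2(k-6)}{3(t-2)}L_0$ when $k\ge 7$.

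For $k\ge 7$, your hoped-for monotonicity in $k$ fails. After inserting $L_0>k/(2n-4)$, this quantity is, up to a positive factor, $-2(t+1)k^2+3(t^2-t+12)k-3(2n-4)(t-2)$. That is a concave quadratic whose vertex $k^*=3(t^2-t+12)/(4(t+1))\approx 3t/4$ lies inside the admissible range once $t\ge 11$, so checking endpoints proves nothing. The paper instead shows that its discriminant, $-3(5t^4-26t^3-43t^2+96t-480)$, is negative.
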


\begin{proof}
For brevity, write $t := r_n$. Since
$r_n=(3+\sqrt{8n-19})/2$ is increasing in $n$, we have $t\geq r_{15} > 13/2$.
Moreover, $0<1-3/t<1$, and $k\leq (2n-4)/3$. Hence
\[
 \frac{2(2n-4)}3 < 2n-k-4+\frac{3k}{t} < 2n-4.
\]
Recall that $L_0(t) = k(2n - k - 4 + 3k/t)^{-1}$. It follows that
$L_0(t) > k/(2n-4)$, and $L_0(t) < 1/2$ by $k\leq (2n-4)/3$.
Since $0\leq\alpha_k, \gamma_k<1$, both quantities
$2-\alpha_kL_0(t)$ and $2-\gamma_kL_0(t)$ are positive.

Now we derive an upper bound for $(n-6)s + kz$. By Lemma \ref{lem:L-L0}, Lemma \ref{lem:topthree}, 
and Lemma \ref{lem:weighted0}, we see $ts + (t-2)z\leq 2 - (\gamma_k)_+ L_0(t)$, 
where $(\gamma_k)_+ = \max\{0, \gamma_k\}$. Therefore
\begin{equation}\label{eq:z-upper-bound}
 z\leq\frac{2 - (\gamma_k)_+ L_0(t) - ts}{t - 2}.
\end{equation}
Substituting this bound into $(n-6)s+kz$, we obtain
\begin{equation}\label{eq:temporary-ineq-2}
(n - 6) s + kz
\leq \Big(n - 6 - \frac{kt}{t-2}\Big)s + \frac{k(2 - (\gamma_k)_+ L_0(t))}{t - 2}.
\end{equation}
If $n - 6 - kt/(t-2)\geq 0$, then the right-hand side of \eqref{eq:temporary-ineq-2} is 
increasing in $s$. Hence, using Lemma \ref{lem:constraint}, $s\leq (2-\alpha_k L_0(t))/(t+1)$, we get
\begin{equation}\label{eq:temporary-ineq-(n-6)s}
(n-6)s + kz\leq \Big(n - 6 - \frac{kt}{t-2}\Big)
\frac{2 - \alpha_kL_0(t)}{t+1} + \frac{k(2 - (\gamma_k)_+ L_0(t))}{t-2}.
\end{equation}
If $n - 6 - kt/(t-2) < 0$, we use the two constraints $z\leq s/2$ and \eqref{eq:z-upper-bound}. 
The two boundary lines meet at
\[
 s_0 = \frac{2(2 - (\gamma_k)_+ L_0(t))}{3t-2}, \quad
 z_0 = \frac{2 - (\gamma_k)_+ L_0(t)}{3t - 2}.
\]
If $s \leq s_0$, then $(n-6)s + kz \leq (n-6)s + ks/2 \leq (n - 6 + k/2) s_0$, and hence
\[
(n-6)s + kz \leq \frac{(2n-12+k) (2 - \gamma_kL_0(t))}{3t-2}.
\]
If $s > s_0$, then \eqref{eq:z-upper-bound} gives
\begin{align*}
(n-6)s + kz 
& \leq (n-6)s + \frac{2 - (\gamma_k)_+ L_0(t) - ts}{t-2} k \\
& \leq \Big(n - 6 - \frac{kt}{t-2}\Big) s + \frac{2 - (\gamma_k)_+ L_0(t)}{t - 2} \\
& \leq \frac{(2n-12+k) (2 - \gamma_kL_0(t))}{3t-2}.
\end{align*}
Thus, whenever $n-6-kt/(t-2) < 0$, we have
\begin{equation}\label{eq:temporary-ineq-3}
(n - 6)s + kz\leq\frac{(2n-12+k) (2 - \gamma_kL_0(t))}{3t-2}.
\end{equation}

We now verify the desired inequality by considering the possible values of $k$.
\smallskip

\noindent\emph{Case 1: $k=2$}. In this case $\alpha_2 = (\gamma_2)_+ = 0$. 
Using \eqref{eq:temporary-ineq-2} together with $t^2-3t=2n-7$, we obtain
\begin{align*}
(n-6)s + kz - L_0(t) - (t-4)
& \leq\frac{2}{t+1}\left(n-6-\frac{2t}{t-2}\right) + \frac{4}{t-2}
 -L_0(t)-(t-4) \\
& = \frac{6-t}{(t-2)(t+1)}-L_0(t)<0.
\end{align*}
This proves the desired inequality for $k=2$.

\smallskip
\noindent\emph{Case 2: $3\leq k\leq 6$}.
We first note that $n - 6 - kt/(t-2) > 0$. Indeed, this expression decreases with $k$, and at $k=6$ we have
\begin{equation}\label{eq:temporary-ineq-4}
2(t-2)\left(n-6-\frac{6t}{t-2}\right)
= t^3 - 5t^2 - 11t + 10 > 0
\end{equation}
for $t>13/2$. Moreover, $\alpha_k\ge1/7$, while $(\gamma_k)_+ = 0$. Hence \eqref{eq:temporary-ineq-2} gives
\begin{align*}
& ~ (n-6)s + kz - L_0(t) - (t-4) \\
< & \Big(n - 6 - \frac{kt}{t-2}\Big)
\frac{2 - k/(7(2n-4))}{t+1} + \frac{2k}{t-2} - \frac{k}{2n-4} - (t-4).
\end{align*}
Putting the right-hand side over a common positive denominator gives
\begin{equation}\label{eq:temporary-ineq-5}
\frac{2tk^2+(-t^3+19t^2-71t+102)k -14t^3+70t^2-126t+84}{14(t-2)(t+1)(2n-4)}.
\end{equation}
For fixed $t$, the numerator in \eqref{eq:temporary-ineq-5} is a convex quadratic in
$k$. Therefore its maximum on $3\leq k\leq 6$ is attained at
$k=3$ or $k=6$. At $k=3$ and $k=6$, respectively, the expression
in \eqref{eq:temporary-ineq-5} becomes
\[
-\frac{17t^3-127t^2+321t-390}{14(t-2)(t+1)(2n-4)}, \quad
-\frac{2(5t^3-46t^2+120t-174)}{7(t-2)(t+1)(2n-4)}.
\]
Note that both $17t^3-127t^2+321t-390$ and $5t^3-46t^2+120t-174$ are positive throughout
$3\leq k\leq 6$, proving the desired inequality in this case.

\smallskip
\noindent\emph{Case 3: $k\geq 7$ and $n-6-kt/(t-2)\geq 0$}.
Using \eqref{eq:temporary-ineq-(n-6)s} and the bound $L_0(t)>k/(2n-4)$, we obtain
\begin{align}\label{eq:temporary-ineq-6}
& ~ (n-6)s + kz - L_0(t) - (t - 4) \nonumber \\
\leq & ~\frac{2}{t+1} \left(n-6-\frac{kt}{t-2}\right) + \frac{k(2 - \gamma_k L_0(t))}{t - 2} - L_0(t) - (t - 4) \nonumber \\
= & ~ \frac{2k-(t-2)}{(t-2)(t+1)} - L_0(t)\left(1 + \frac{2(k-6)}{3(t-2)}\right) \nonumber \\
< & ~ \frac{2k-(t-2)}{(t-2)(t+1)} - \frac{k}{2n-4}
\left(1 + \frac{2(k-6)}{3(t-2)}\right).
\end{align}
Multiplying the last expression by $3(2n-4)(t-2)(t+1) > 0$, it numerator becomes
\begin{equation}\label{eq:temporary-ineq-7}
 -2(t+1)k^2+3(t^2-t+12)k-3(2n-4)(t-2).
\end{equation}
Using $t^2-3t=2n-7$, the discriminant of this quadratic, regarded as a polynomial in $k$, 
is $-3(5t^4 - 26t^3 - 43t^2 + 96t - 480) < 0$. Thus the quadratic in \eqref{eq:temporary-ineq-7} 
is negative for every real $k$, so the last expression in \eqref{eq:temporary-ineq-6} is negative, 
proving the desired inequality in this case.

\smallskip
\noindent\emph{Case 4: $k\geq 7$ and $n-6-kt/(t-2) < 0$}.
In this case, put $\theta := k/(2n-4)$. Since $0<\theta\leq 1/3$, the quantity
$1-(1-3/t)\theta$ is positive. By the definition of $L_0(x)$,
\[
L_0(t) = \frac{\theta}{1-(1-3/t)\theta}, \qquad
\gamma_k L_0(t) = \frac{2}{3}\cdot\frac{\theta - 6/(2n-4)}{1 - (1 - 3/t)\theta}.
\]
Substituting into \eqref{eq:temporary-ineq-3}, and using
$2n - 4 = t^2 - 3t + 3$, gives
\begin{align}\label{eq:temporary-ineq-10}
& ~ (n-6)s + kz - L_0(t) - (t - 4) \nonumber \\
\leq & ~ \frac{2n-12+(2n-4)\theta}{3t-2}
\left(2 - \frac{2}{3}\cdot\frac{\theta - 6/(2n-4)}{1 - (1 - 3/t)\theta}\right) - \frac{\theta}{1-(1-3/t)\theta} - (t - 4) \nonumber \\
= & ~\frac{-2(2n-4)(4t-9) \theta^2 + (7t^3 - 54t^2 + 172t - 162) \theta - 3t^3 + 24t^2 - 42t 
- \dfrac{96t}{2n-4}}{(3t-2) (3t - (3t - 9)\theta)}.
\end{align}
The numerator in the last fraction is a quadratic in $\theta$ with
negative leading coefficient. Its discriminant is
\[
-(47t^6 - 516t^5 + 988t^4 + 5652t^3 - 25720t^2 + 39744t - 26244) < 0.
\]
Hence the numerator in \eqref{eq:temporary-ineq-10} is negative for every real $\theta$. 
This proves the desired inequality in the last case.
\end{proof}

\begin{proposition}\label{prop:universal}
The extremal graph $G$ has a universal vertex.
\end{proposition}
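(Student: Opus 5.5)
The plan is to argue by contradiction. Suppose $G$ has no universal vertex, so that $d(v)\leq n-2$ and hence $c_v\leq n-6$ for every $v$. The identity $\lambda = 4+E-L$ of Lemma~\ref{lem:identities} turns the problem into bounding the weighted sum $E=\sum_v c_vx_v$ from above by a linear expression in $s=x_1+x_2$ and $z=x_3$. The aim is to show $\lambda-4 < r_n-4$, which contradicts $\lambda>r_n$ (Lemma~\ref{lem:benchmark}).

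\emph{Step 1: a rearrangement bound.} The weights satisfy $0\leq c_v\leq n-6$ and $\sum_v c_v = 2n-12+k$ (Lemma~\ref{lem:identities}), and the entries are ordered $x_1\geq x_2\geq\cdots$. A rearrangement/greedy argument then shows that $E$ is at most the value obtained by putting weight $n-6$ on $x_1$, weight $n-6$ on $x_2$, and the remaining total weight $k$ on entries that are each at most $x_3=z$. This gives
\[
E\leq (n-6)s + kz, \qquad\text{hence}\qquad \lambda-4\leq (n-6)s+kz-L.
\]
(The case where $2n-12+k<2(n-6)$ cannot occur, since $k\geq 2$ by Lemma~\ref{lem:twoears}.)

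\emph{Step 2: transferring the constraints from $\lambda$ to $t=r_n$.} I need to check that $(s,z,L)$ lies in the region used in the proof of Lemma~\ref{lem:normalized-first}, with $t=r_n$ in place of $\lambda$. Since $L_0(x)=k/(2n-k-4+3k/x)$ is increasing in $x$ and $\lambda>t$, Lemma~\ref{lem:L-L0} gives $L\geq L_0(\lambda)\geq L_0(t)$. Because $s,z\geq 0$, the inequalities of Lemma~\ref{lem:constraint}, Lemma~\ref{lem:topthree} and Lemma~\ref{lem:weighted0} imply the corresponding ones with $t$:
\[
(t+1)s\leq 2-\alpha_kL_0(t),\qquad ts+(t-2)z\leq 2-(\gamma_k)_+L_0(t).
\]
Together with the trivial $z\leq x_2\leq s/2$ and the range $2\leq k\leq (2n-4)/3$ from Lemma~\ref{lem:cubic-core}, these are exactly the hypotheses used in the proof of Lemma~\ref{lem:normalized-first}. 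In Step 1 the term $-L$ also appears; I would replace it by $-L_0(t)$ using $L\geq L_0(t)$.

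\emph{Step 3: conclusion.} Lemma~\ref{lem:normalized-first} now yields
\[
\lambda-4\leq (n-6)s+kz-L_0(t)<r_n-4,
\]
so $\lambda<r_n$, which contradicts $\lambda>r_n$.

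The main point needing care is Step 2. One must confirm that Lemma~\ref{lem:normalized-first} is really a statement about all $(s,z)$ satisfying these linear constraints. Its proof only uses the upper bounds on $s$ and $z$ and the relation $z\leq s/2$, so this should be fine. One must also check that the monotonicity in $\lambda$ and in $L$ runs in the right direction in each inequality; it does, because $L$ enters every constraint with a nonpositive coefficient on the right-hand side.
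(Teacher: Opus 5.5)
Your proposal is correct and matches the paper's proof: you bound $E\le (n-6)s+kz$ from $c_v\le n-6$, $\sum_v c_v=2n-12+k$ and $x_i\le z$ for $i\ge 3$, then conclude $\lambda-4=E-L\le (n-6)s+kz-L_0(r_n)<r_n-4$ via Lemma~\ref{lem:normalized-first}, contradicting $\lambda>r_n$. Your Step 2 makes explicit a monotonicity argument that the paper uses silently when it replaces $\lambda$ by $r_n$ inside the proof of Lemma~\ref{lem:normalized-first}; that argument rests on $s,z\ge 0$, the coefficients $\alpha_k,(\gamma_k)_+\ge 0$, and $L_0$ being increasing.
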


\begin{proof}
Assume, for a contradiction, that $d(v)\leq n-2$ for each $v\in V(G)$.
Then $c_v = \max\{0,\, d(v) - 4\} \leq n-6$ for every $v\in V(G)$.
By \eqref{eq:lambda-E-L}, we have $\sum_v c_v = 2n - 12 + k$. Since
$x_i\leq z$ for $i\geq 3$, it follows that
\begin{align}
E
& \leq z\sum_{v\in V(G)} c_v + (x_1 - z)c_{v_1} + (x_2 - z)c_{v_2} \nonumber \\
& \leq z(2n - 12 + k) + (n-6)(x_1 + x_2 - 2z) \nonumber \\
& = (n-6)s + kz. \label{eq:E-upper-bound}
\end{align}
Combining \eqref{eq:E-upper-bound} with Lemma \ref{lem:identities}, we get
$\lambda - 4 = E - L\leq (n - 6)s + kz - L$.
Since $L > L_0(r_n)$, this yields
$\lambda - 4 < (n - 6)s + kz - L_0(r_n)$.
On the other hand, it follows from Lemma \ref{lem:normalized-first} that
$(n-6)s + kz - L_0(r_n) < r_n - 4$.
Combining the above two inequalities,
\[
\lambda - 4 < (n - 6)s + kz - L_0(r_n) < r_n - 4,
\]
a contradiction. Hence $G$ has a universal vertex.
\end{proof}

\section{Forcing a second universal vertex}
\label{sec:second-universal-vertex}

In Section \ref{sec:first-universal-vertex}, we proved that $G$ has a universal vertex, say $u$. 
The eigenvalue\,--\,eigenvector equation at $u$ implies that $x_u = 1/(\lambda + 1)$. 
Moreover, this is the largest Perron entry. Hence, we may assume the universal vertex is $v_1$.
The aim of this section is to prove that $G$ has a second universal vertex.

Since $v_1$ is a universal vertex, the graph $G - v_1$ has
$3n - 6- (n - 1) = 2(n-1) - 3$ edges. This is exactly the maximum number of edges in an outerplanar
graph on $n-1$ vertices. Hence $G - v_1$ is maximal outerplanar.

Throughout this section, write $y := x_2$, $z := x_3$. The cubic vertices of $G$ are exactly 
the degree-two vertices of the maximal outerplanar graph $G - v_1$. By Lemma~\ref{lem:cubic-core}, these
vertices form an independent set. So, they are nonconsecutive on the outer Hamilton cycle. 
Since an independent set on a cycle of length $n-1$ has size at most
$\lfloor(n-1)/2\rfloor$, and hence $2\leq k\leq\lfloor (n-1)/2\rfloor$.

\begin{lemma}\label{lem:outer-ear-mass}
$(\lambda + 1)y \leq 1 - \alpha_k L$.
\end{lemma}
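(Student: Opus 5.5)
We mimic the proof of Lemma~\ref{lem:constraint}, now applied to the single vertex $v_2$. By Lemma~\ref{lem:identities}, $r_{v_2} = 1-(\lambda+1)y$, so it suffices to prove $r_{v_2}\geq \alpha_k L$. Since $r_{v_2}=\sum_{w\notin N[v_2]} x_w$ is a sum of nonnegative terms, we may drop all non-cubic vertices. It therefore suffices to show that the cubic vertices lying in $N[v_2]$ have total Perron mass at most $6L/(2k+1)$, and that every other cubic vertex contributes its full entry to $r_{v_2}$.

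\textbf{Step 1: at most two cubic vertices lie in $N[v_2]$.} Recall that the cubic vertices of $G$ are exactly the degree-two vertices of the maximal outerplanar graph $G-v_1$; in particular $v_1$ itself is not cubic. Suppose first that $v_2$ is not cubic. A cubic vertex $w$ adjacent to $v_2$ has degree two in $G-v_1$, and its two neighbours there are its neighbours on the outer Hamilton cycle $C$ of $G-v_1$. Hence $v_2$ must be one of the two cycle-neighbours of $w$, so $w$ is one of the (at most two) cycle-neighbours of $v_2$ on $C$. Equivalently, in $G$ the support triangle of $w$ contains the edge $v_1v_2$, and this edge lies in exactly two faces. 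Now suppose $v_2$ is itself cubic. By Lemma~\ref{lem:cubic-core}(1), no other cubic vertex is adjacent to it, so the only cubic vertex in $N[v_2]$ is $v_2$. In either case $N[v_2]$ contains at most two cubic vertices.

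\textbf{Step 2: conclude.} By Step~1, the cubic mass missing from $r_{v_2}$ is at most $2x^*$. By \eqref{eq:M-upper-bound} and $c>2/3$,
\[
2x^*\leq \frac{2L}{1+(k-1)c}\leq \frac{2L}{1+2(k-1)/3}=\frac{6L}{2k+1}.
\]
Every remaining cubic vertex contributes its full entry to $r_{v_2}$, so
\[
r_{v_2}\geq L-\frac{6L}{2k+1}=\alpha_k L \qquad (k\geq 3).
\]
For $k=2$ we have $\alpha_2=0$, and the trivial bound $r_{v_2}\geq 0$ suffices. Rearranging $1-(\lambda+1)y=r_{v_2}\geq \alpha_k L$ gives $(\lambda+1)y\leq 1-\alpha_k L$.

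The main point requiring care is Step~1. The argument of Lemma~\ref{lem:constraint} counts cubic vertices in $N[v_1]\cap N[v_2]$, whereas here we need the stronger statement that $N[v_2]$ alone contains at most two cubic vertices. This is exactly where the universality of $v_1$ enters, through the outerplanar structure of $G-v_1$: since every cubic vertex is adjacent to $v_1$, any cubic vertex in $N[v_2]$ automatically lies in $N[v_1]\cap N[v_2]$. Once this is in place, the rest is the same calculation as before.
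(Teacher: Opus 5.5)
Your proof is correct and follows essentially the same route as the paper. You use the outer Hamilton cycle of the maximal outerplanar graph $G-v_1$ to show that $N[v_2]$ contains at most two cubic vertices. You then bound the missing cubic mass by $2x^*\leq 6L/(2k+1)$ via \eqref{eq:M-upper-bound} and $c>2/3$, with the trivial bound when $k=2$. Your closing remark that every cubic vertex lies in $N[v_1]$ also means the lemma follows at once from Lemma~\ref{lem:constraint}, since $x_1=1/(\lambda+1)$, i.e.\ $r_{v_1}=0$.
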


\begin{proof}
Since $G-v_1$ is maximal outerplanar, every vertex of $G-v_1$ lies on its outer Hamilton cycle. 
Moreover, a cubic vertex of $G$ is precisely a degree two vertex of $G-v_1$, and such a vertex 
is adjacent in $G-v_1$ only to its two neighbours on the outer cycle.

Fix a vertex $v\in V(G)\setminus\{v_1\}$. We first show that the closed neighborhood of $v$ in 
$G-v_1$ contains at most two cubic vertices. Indeed,
if $v$ is not a cubic vertex, then any cubic vertex of $G$ in $N_{G-v_1}[v]$ must be one of 
the two outer-cycle neighbours of $v$. Hence $N_{G-v_1}[v]$ contains at most two cubic vertices. If 
$v$ is itself a cubic vertex, then no other cubic vertex is adjacent to $v$, since $\mathcal{S}$ 
is an independent set. Hence $N_{G-v_1}[v]$ contains only the cubic vertex $v$ itself.
In either case, the closed neighborhood of $v$ contains at most two cubic vertices.

For cubic vertices, being adjacent to $v$ in $G$ is equivalent to being
adjacent to $v$ in $G-v_1$. Therefore $N_G[v]$ also contains at most two cubic vertices. 
Since every cubic vertex has Perron entry at most $x^*$, we deduce 
$\sum_{u\in \mathcal{S}\setminus N_G[v]} x_u\geq L - 2x^*$. Suppose first that $k\geq 3$. By Lemma \ref{lem:relocation},
$x^*\leq L/(1+(k-1)c)$ and $c > 2/3$. Thus $1+(k-1)c > (2k+1)/3$, and consequently
$2x^*\leq 6L/(2k+1)$. It follows that
\begin{equation}\label{eq:intermediate-step}
r_v\geq\sum_{u\in \mathcal{S}\setminus N_G[v]} x_u
\geq L - \frac{6L}{2k+1} = \frac{2k-5}{2k+1}L = \alpha_kL.
\end{equation}
When $k=2$, the left-hand side of \eqref{eq:intermediate-step} is nonnegative, while
$\alpha_2 = 0$, so \eqref{eq:intermediate-step} remains true.

Finally, take $v=v_2$, we have $1 - (\lambda + 1) y = r_{v_2}\geq \alpha_k L$.
Rearranging gives the desired result.
\end{proof}

Substituting $x_1 = 1/(\lambda + 1)$ into Lemma \ref{lem:topthree}, we immediately obtain the following:

\begin{lemma}\label{lem:lambda-y+lambda-z}
$\lambda\, y + (\lambda - 2) z
\leq \frac{\lambda + 2}{\lambda + 1}$. 
\end{lemma}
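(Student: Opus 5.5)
Since $v_1$ is universal, the plan is to specialize Lemma~\ref{lem:topthree} to this situation. By the indexing fixed in Section~\ref{sec:series-lemmas}, the vertices with the three largest Perron entries are $v_1,v_2,v_3$, and $v_1$ is the universal vertex. Hence $s = x_1 + x_2 = x_1 + y$ and $z = x_3$, matching the notation of this section. Lemma~\ref{lem:topthree} then reads $\lambda(x_1+y) + (\lambda-2)z \leq 2$.

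Next I would use the eigenvalue--eigenvector equation at $v_1$. Since $N(v_1) = V(G)\setminus\{v_1\}$ and $\|\bm{x}\|_1=1$, it gives $\lambda x_1 = 1 - x_1$, that is, $x_1 = 1/(\lambda+1)$. Equivalently, Lemma~\ref{lem:identities} gives $r_{v_1} = 0 = 1-(\lambda+1)x_1$. Substituting, we get $\lambda x_1 = \lambda/(\lambda+1)$, and therefore
\[
\lambda y + (\lambda-2) z \leq 2 - \frac{\lambda}{\lambda+1} = \frac{\lambda+2}{\lambda+1},
\]
which is the claim.

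Two points need checking. First, Lemma~\ref{lem:topthree} must apply with the present labeling. It was proved for any extremal $G$ with $x_1\geq x_2\geq\cdots$, and placing $v_1$ as the universal vertex is consistent with this because $x_u = 1/(\lambda+1)$ is the maximum possible entry, as shown in the proof of Lemma~\ref{lem:relocation}. Second, if several vertices tie for the maximum, any consistent ordering works, since the proof of Lemma~\ref{lem:topthree} only used $x_w\leq z$ for $w\notin U$. I see no real obstacle; the argument is a direct substitution.
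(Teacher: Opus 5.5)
Your proposal is correct and is the paper's argument: the paper obtains this lemma by substituting $x_1 = 1/(\lambda+1)$ into Lemma~\ref{lem:topthree}, giving $\lambda y + (\lambda-2)z \leq 2 - \lambda/(\lambda+1) = (\lambda+2)/(\lambda+1)$. Your justification that the universal vertex may be labelled $v_1$, because $x_w \leq 1/(\lambda+1)$ for every $w$, is the same observation the paper makes at the start of Section~\ref{sec:second-universal-vertex}.
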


Lemma \ref{lem:weighted0} gives 
$\lambda s + (\lambda - 2) z < 2 - \frac{2(k - 6)}{3k} L$. Since $x_1 = 1/(1+\lambda)$ 
and $s = x_1 + y$, this can be rewritten as 
\[
\lambda\, y + (\lambda - 2) z < \frac{\lambda + 2}{\lambda + 1} - \frac{2(k-6)}{2k} L.
\]
The next lemma shows that this estimate can be sharpened.

\begin{lemma}\label{lem:weighted1}
If $k\geq 5$, then
\[
\lambda\, y + (\lambda - 2)z
< \frac{\lambda + 2}{\lambda + 1} - \gamma_k' L, \quad
\gamma_k' = \frac{2(k-4)}{3k}.
\]
\end{lemma}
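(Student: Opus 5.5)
The plan is to rerun the argument of Lemma~\ref{lem:weighted0}, now using that $v_1$ is universal and $G-v_1$ is maximal outerplanar. This lets us drop $v_1$ from the bookkeeping and work with the pair $v_2,v_3$ only.

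\emph{Step 1 (identity and counting function).} Since $v_1$ is universal, $r_{v_1}=0$. Lemma~\ref{lem:identities} gives
\[
r_{v_2}+r_{v_3}=2-(\lambda+1)(y+z).
\]
For $w\notin U=\{v_1,v_2,v_3\}$, let $m'_w:=|\{i\in\{2,3\}: w\notin N[v_i]\}|\in\{0,1,2\}$. Then
\[
r_{v_2}+r_{v_3}\ge \sum_{w\notin U} m'_w x_w=(1-x_1-y-z)+X, \qquad X:=\sum_{w\notin U}(m'_w-1)x_w .
\]

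\emph{Step 2 (lower bound for $X$).} First, the vertices with $m'_w=0$: these are common neighbours of $v_2,v_3$ outside $U$. I claim there are at most two of them. In $G$, three such vertices together with $v_1,v_2,v_3$ would span a $K_{3,3}$; equivalently, $G-v_1$ is outerplanar and so contains no $K_{2,3}$. Each of these vertices has $x_w\le z$, so together they contribute at least $-2z$ to $X$.

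Next, the cubic vertices. By the argument in the proof of Lemma~\ref{lem:outer-ear-mass}, each closed neighbourhood $N_G[v_2]$ and $N_G[v_3]$ contains at most two cubic vertices. Hence at most four cubic vertices lie in $N[v_2]\cup N[v_3]$. Every other cubic vertex outside $U$ has $m'_w=2$ and contributes $x_w\ge cx^*$ to $X$, by Lemma~\ref{lem:relocation}. These are at least $k-4$ cubic vertices; cubic vertices lying in $U$ are already inside these neighbourhoods, so they do not lower the count. Vertices with $m'_w=1$ contribute $0$, and all remaining terms are nonnegative. Therefore
\[
X\ge (k-4)cx^*-2z .
\]
Note that this route avoids the $h$-case distinction of Lemma~\ref{lem:weighted0}: a cubic vertex with $m'_w=0$ is already absorbed into the $-2z$ term.

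\emph{Step 3 (conclude).} Since $k\ge5$, we have $k-4>0$. Using $c>2/3$ and $x^*\ge L/k>0$ from \eqref{eq:M-upper-bound}, we get the strict bound $X>-2z+\frac{2(k-4)}{3k}L=-2z+\gamma'_kL$. Combining with Step 1 and $x_1=1/(\lambda+1)$ gives
\[
2-(\lambda+1)(y+z)>1-\frac1{\lambda+1}-y-3z+\gamma_k'L .
\]
This rearranges exactly to $\lambda y+(\lambda-2)z<\frac{\lambda+2}{\lambda+1}-\gamma'_kL$.

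\emph{Main obstacle.} The delicate step is the cubic-vertex count in Step 2. One must check that the bound of two cubic vertices per closed neighbourhood holds even when $v_2$ or $v_3$ is itself cubic, which follows from the independence of $\mathcal S$ (Lemma~\ref{lem:cubic-core}). One must also make sure no cubic vertex is double-counted between the ``$m'_w=0$'' group and the ``at most four'' exceptional group. I would do this by bounding the $m'_w=0$ vertices purely by $K_{2,3}$-freeness and treating all other cubic exceptions as contributing at least zero.
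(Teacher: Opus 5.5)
Your proof is correct and follows the paper's approach. You rerun the argument of Lemma~\ref{lem:weighted0} with $r_{v_1}=0$, use the outerplanar fact that each of $N[v_2]$ and $N[v_3]$ contains at most two cubic vertices to replace $k-6$ by $k-4$, and finish with $x_w\ge cx^*$, $x^*\ge L/k$ and $x_1=1/(\lambda+1)$. Your version is slightly simpler: you count directly the at least $k-4$ cubic vertices outside $N[v_2]\cup N[v_3]$ (all with $m'_w=2$) and absorb any cubic common neighbour of $v_2,v_3$ into the $-2z$ term, which avoids the paper's case split on $h$ and its auxiliary step $x^*\le z$.
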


\begin{proof}
As the proof of Lemma \ref{lem:weighted0}, set $X := \sum_{w\notin U} (m_w - 1) x_w$.
Suppose first that neither $v_2$ nor $v_3$ is a cubic vertex. Fix a vertex $v\neq v_1$. 
If a cubic vertex $w$ is adjacent to $v$, then, in $G-v_1$, the vertex $v$ must be one 
of the two outer-cycle neighbors of $w$. Equivalently, $w$ must be one of the two outer-cycle 
neighbors of $v$. Thus $v$ is adjacent to at most two cubic vertices. Hence there are at most
four incidences between cubic vertices and $\{v_2,v_3\}$. 
For each cubic vertex $w$, let $a_w$ be the number of vertices in $\{v_2,v_3\}$ adjacent to $w$. Then
$m_w = 2 - a_w$, and therefore
\begin{equation}\label{eq:temporary-ineq-9}
\sum_{w\in\mathcal{S}} (m_w - 1)
= k - \sum_{w\in\mathcal{S}} a_w\geq k-4.
\end{equation}

If exactly one of $v_2,v_3$ is a cubic vertex; say $v_2$ is a cubic vertex and $v_3$ is not. 
Since $\mathcal{S}$ is independent, no other cubic vertex is adjacent to $v_2$. On the other hand, 
$v_3$ is adjacent to at most two cubic vertices among the remaining $k-1$ cubic vertices. 
Hence the left side of \eqref{eq:temporary-ineq-9}, now summed over cubic vertices
outside $U$, is at least $k-3$. If both $v_2,v_3$ are cubic vertices, then 
$\sum_{w\in\mathcal{S}\setminus U} \geq k-2$. Consequently, in all cases we have proved 
$\sum_{w\in\mathcal{S}\setminus U} (m_w - 1)\geq k - 4$.

With this estimate in hand, the rest of the proof follows exactly the
same argument as in Lemma \ref{lem:weighted0}.
\end{proof}

\begin{lemma}\label{lem:normalized-second}
We have
\begin{equation}\label{eq:second-univer}
\frac{n-5}{r_n+1} + (n-6)y + (k-1)z - L_0(r_n) < r_n - 4.
\end{equation}
\end{lemma}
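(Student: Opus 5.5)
The plan mirrors the proof of Lemma~\ref{lem:normalized-first}, now in the two variables $y=x_2$ and $z=x_3$, with $x_1=1/(\lambda+1)$ fixed. Throughout I write $t:=r_n$. The three linear constraints are:
\begin{enumerate}
\item[(a)] Lemma~\ref{lem:outer-ear-mass}: $(t+1)y\le 1-\alpha_kL_0(t)$.
\item[(b)] Lemmas~\ref{lem:lambda-y+lambda-z} and~\ref{lem:weighted1}: $ty+(t-2)z\le \frac{t+2}{t+1}-(\gamma_k')_+L_0(t)$.
\item[(c)] The trivial bound $z\le y$.
\end{enumerate}
Passing from $\lambda$ to $t$ and from $L$ to $L_0(t)$ needs a monotonicity check. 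The constraints were derived with the true $\lambda>t$ and $L\ge L_0(\lambda)$, and $L_0$ is increasing in its argument, so $L\ge L_0(\lambda)>L_0(t)$. The left-hand sides are also monotone in $\lambda$ in the right direction, while $(\lambda+2)/(\lambda+1)$ is decreasing in $\lambda$. Hence replacing $\lambda$ by $t$ only weakens the constraints, as in the previous section.

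For the optimization, I first eliminate $z$ using (b):
\[
(n-6)y+(k-1)z\le\Big(n-6-\tfrac{(k-1)t}{t-2}\Big)y+\tfrac{k-1}{t-2}\Big(\tfrac{t+2}{t+1}-(\gamma_k')_+L_0(t)\Big).
\]
\begin{itemize}
\item If the coefficient of $y$ is nonnegative, I plug in the maximal $y$ from (a).
\item Otherwise the maximum sits at the intersection of (b) with $z=y$, namely $y_0=z_0=\big(\frac{t+2}{t+1}-(\gamma_k')_+L_0(t)\big)/(2t-2)$. This gives the bound $(n-7+k)y_0$.
\end{itemize}
In both cases I then add $\frac{n-5}{t+1}-L_0(t)-(t-4)$. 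I simplify using $t^2-3t=2n-7$, so $2n-4=t^2-3t+3$, and use $L_0(t)>k/(2n-4)$ where a lower bound on $L_0$ is needed.

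The cases on $k$ are then:
\begin{itemize}
\item $k=2$, where $\alpha_2=\gamma'_2=0$. A direct computation should reduce the expression to something like $\frac{c-t}{(t-2)(t+1)}-L_0(t)$, which is negative.
\item $3\le k\le4$. Here $\alpha_k\ge1/7$ and $\gamma'_k\le0$, and the coefficient of $y$ is positive. Clearing denominators gives a convex quadratic in $k$, so it suffices to check the endpoints as cubic polynomials in $t$ that are positive for $t>13/2$.
\item $k\ge5$ with nonnegative coefficient. After clearing denominators the numerator is a quadratic in $k$, which I would show has negative discriminant, as in Case~3 of Lemma~\ref{lem:normalized-first}.
\item $k\ge5$ with negative coefficient. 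I set $\theta=k/(2n-4)$, now with $\theta\le (n-1)/(2(2n-4))$, rewrite $L_0(t)$ and $\gamma'_kL_0(t)$ as rational functions of $\theta$, and show that the resulting numerator, a concave quadratic in $\theta$, is negative on the allowed range. Negative discriminant is the goal; failing that, I would check the vertex location and the endpoints.
\end{itemize}

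I expect the main obstacle to be the last case, together with keeping the algebra exact. The leading term $\frac{n-5}{t+1}$ is about $t/2$, so the available slack is only $O(1)$. It has to come from $-L_0(t)$, from the $\gamma'_k$ gain, and from the loss of the $1/(t+1)$ mass in $y$. If a discriminant turns out not to be negative for all $t>13/2$, I would restrict to the actual range $k\le (n-1)/2$ and use the explicit bound $t\ge r_{15}$ to finish by checking endpoints.
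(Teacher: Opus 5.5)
Your skeleton matches the paper's: constraints (a) and (b), eliminate $z$ with (b), insert the largest $y$ allowed by (a), replace $L_0(t)$ by $k/(2n-4)$, and split into $k=2$, $k\in\{3,4\}$, $k\ge 5$. Your negative-coefficient branch, and with it the constraint $z\le y$, is vacuous: since $2(k-1)\le n-3$, one has $2(t-2)\bigl(n-6-\frac{(k-1)t}{t-2}\bigr)\ge (n-9)t-4n+24>0$.

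The genuine gap is that two of your planned substitutions are too lossy for small $n$. Using $(t-4)(t+1)=2n-11$, the left side of \eqref{eq:second-univer} minus $(t-4)$ is at most
\[
\frac{2(k-1)}{(t-2)(t+1)}-L_0(t)\Bigl(1+\frac{(k-1)(\gamma_k')_+}{t-2}+\alpha_k\,\frac{(n-6)(t-2)-(k-1)t}{(t-2)(t+1)}\Bigr).
\]
The first term is positive, so all the slack comes from $L_0(t)$, mostly through the $\alpha_k$-term. (For $k=2$ the bound is $\frac{2}{(t-2)(t+1)}-L_0(t)$. It is negative because $L_0(t)>\frac{2}{t^2-3t+3}>\frac{2}{t^2-t-2}$, not because of a numerator $c-t$.)

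In the case $3\le k\le 4$, replacing $\alpha_k$ by $1/7$ and $L_0(t)$ by $k/(2n-4)$ gives, at $k=4$,
\[
-\frac{2t^3-24t^2+88t-162}{7(t-2)(t+1)(2n-4)}.
\]
Here $2t^3-24t^2+88t-162<0$ for $13/2<t<7.62$, i.e.\ for $15\le n\le 21$; at $n=15$ the bound is about $+0.0087$. So the endpoint check you describe fails. You must use the exact value $\alpha_4=1/3$, which gives $-\frac{2(t^3-8t^2+16t-29)}{3(t-2)(t+1)(2n-4)}<0$. Treat $k=3$ and $k=4$ separately, as the paper does, rather than through a quadratic in $k$ with a uniform $\alpha$.

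The same sensitivity affects $k\ge5$. Suppose you model this case on Case~3 of Lemma~\ref{lem:normalized-first}, which discards the $\alpha_k$-term, or keep only $\alpha_k\ge 1/7$. Then the bound is already positive at $k=5$, $n=15$: about $+0.020$, respectively $+0.008$. You need $\alpha_k\ge 5/11$.

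Even then, the resulting concave quadratic in $k$ (or in $\theta=k/(2n-4)$) has positive discriminant for large $t$, since its linear coefficient grows like $t^3$. So the negative-discriminant argument does not transfer. What works, and what the paper does, is a concavity argument anchored at the left endpoint:
\begin{itemize}
\item The bound at $k=5$ equals $-\frac{75t^3-573t^2+905t-1318}{66(t-2)(t+1)(2n-4)}<0$.
\item Its $\theta$-derivative there is $-\frac{15t^3-141t^2+295t-158}{66(t-2)(t+1)}$, which is $\le 0$ once $t\ge 67/10$.
\item Hence the bound only decreases for larger $k$, leaving just $n=15$, $k\in\{5,6,7\}$ to check by hand.
\end{itemize}
Your fallback of checking the vertex and the endpoints has to be carried out here, with $\alpha_k\ge 5/11$, rather than in the vacuous negative-coefficient branch.
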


\begin{proof}
Set for short, $t:=r_n$. Since $k\leq (n-1)/2\leq (2n-4)/3$, the same argument as in the 
proof of Lemma \ref{lem:normalized-first} gives
\begin{equation}\label{eq:second-t-L0}
t > \frac{13}{2}, \qquad \frac{k}{2n-4} < L_0(t) < \frac{1}{2}.
\end{equation}
In view of Lemma \ref{lem:lambda-y+lambda-z} and Lemma \ref{lem:weighted1}, we obtain
\[
z\leq\frac{1}{t-2} \left(\frac{t+2}{t+1} - (\gamma'_k)_+ L_0(t) -ty\right),
\]
where $(\gamma'_k)_+:= \max\{0,\,\gamma'_k\}$.
Together with Lemma \ref{lem:outer-ear-mass}, this yields 
\begin{equation}\label{eq:temporary-ineq-12}
(n-6)y + (k-1)z\leq \frac{k-1}{t-2}
\left(\frac{t+2}{t+1} - (\gamma'_k)_+ L_0(t)\right)
+ \left(n - 6 - \frac{(k-1)t}{t-2}\right)
\frac{1-\alpha_kL_0(t)}{t+1}.
\end{equation}
Here the coefficient $n-6-(k-1)t/(t-2)$ is positive. Indeed, since $2(k-1)\leq n-3$, we have
\begin{align*}
2(t-2)\left(n-6-\frac{(k-1)t}{t-2}\right)
& \geq 2(n-6)(t-2) - (n-3)t \\
& = (n-9)t - 4n + 24 > 0.
\end{align*}

We now verify the desired inequality by considering the possible values
of $k$.
\smallskip

\noindent\emph{Case 1: $k=2$}. In this case $\alpha_2 = (\gamma'_2)_+ = 0$. Then \eqref{eq:temporary-ineq-12} becomes 
\[
(n-6)y + (k-1)z \leq \frac{2}{(t-2)(t+1)} + \frac{n-6}{t+1}.
\]
It follows that 
\begin{align*}
\frac{n-5}{t+1} + (n-6)y + (k-1)z - L_0(t) 
& \leq \frac{2n-11}{t+1} + \frac{2}{(t-2)(t+1)} - \frac{1}{n-3+3/t} \\
& < \frac{2n-11}{t+1} = t-4.
\end{align*}
This proves the desired inequality for $k=2$.
\smallskip

\noindent\emph{Case 2: $k=3$ or $k=4$}. By \eqref{eq:second-t-L0}, we see $L_0(t)>k/(2n-4)$. Substituting
this lower bound into \eqref{eq:temporary-ineq-12} gives, respectively,
\begin{align*}
\frac{n-5}{t+1}+(n-6)y+2z-L_0(t)-(t-4)
& < -\frac{3t^3-29t^2+117t-222}{14(t-2)(t+1)(2n-4)} < 0, \\
\frac{n-5}{t+1}+(n-6)y+3z-L_0(t)-(t-4)
& < -\frac{2(t^3-8t^2+16t-29)}{3(t-2)(t+1)(2n-4)} < 0.
\end{align*}
Hence, \eqref{eq:second-univer} holds for $k=3$ and $k=4$.
\smallskip

\noindent\emph{Case 3: $k\geq 5$}. Set $\theta:= k/(2n-4)$.
Then $\theta\ge5/(2n-4)$. Since $\alpha_k\geq 5/11$, the bounds in \eqref{eq:second-t-L0} imply
\begin{equation}\label{eq:temporary-ineq-13}
\frac{1-\alpha_kL_0(t)}{t+1} < \frac{1-5\theta/11}{t+1}, \quad
\gamma'_kL_0(t) > \frac{2\theta}{3}-\frac{8}{3(2n-4)}.
\end{equation}
Substituting \eqref{eq:second-t-L0}, and \eqref{eq:temporary-ineq-13} into \eqref{eq:temporary-ineq-12}, 
shows that the left-hand side of \eqref{eq:second-univer}, after subtracting $t-4$, is smaller than
\begin{align*}
F_t(\theta) = {} 
& \frac{(2n-4)\theta-1}{t-2} \left(\frac{t+2}{t+1}-\frac{2\theta}{3} + \frac{8}{3(2n-4)}\right) \\
& + \left(n - 6 - \frac{\big((2n - 4)\theta - 1\big) t}{t-2}\right) \frac{1-5\,\theta/11}{t+1} + \frac{n-5}{t+1} - \theta - (t - 4).
\end{align*}
This is a quadratic in $\theta$. Taking the derivative and using
$2n-4=t^2-3t+3$, gives
\begin{align*}
F_t'(\theta) 
& = -\frac{(28t^3 + 4t^2 - 180t + 264) \theta
+ 15t^3 - 141t^2 + 155t - 598}{66(t-2)(t+1)}, \\
F_t''(\theta)
& = -\frac{28t^3+4t^2-180t+264}{66(t-2)(t+1)}<0. 
\end{align*}
Thus $F_t(\theta)$ is strictly concave. At the left endpoint
$\theta=5/(2n-4)$, direct simplification gives
\begin{align}
F_t' \Big(\frac{5}{2n - 4}\Big)
& = -\frac{15t^3 - 141t^2 + 295t - 158}{66(t-2)(t+1)}, \label{eq:F-prime-value} \\
F_t\Big(\frac{5}{2n-4}\Big)
& = -\frac{75t^3 - 573t^2 + 905t - 1318}{66(t - 2)(t + 1)(2n - 4)} < 0. \label{eq:F-value}
\end{align}
If $15t^3 - 141t^2 + 295t - 158\geq 0$,
then \eqref{eq:F-prime-value} shows that the derivative of $F_t$ is nonpositive at the
left endpoint. Since $F_t$ is strictly concave, its derivative is negative thereafter. 
Therefore \eqref{eq:F-value} implies
$F_t(\theta)\leq F_t (5/(2n-4)) < 0$ for $\theta\geq 5/(2n-4)$.
It remains to consider the case
$f(t):= 15t^3 - 141t^2 + 295t - 158 < 0$.
Note that $f'(t) = 45t^2 - 282t + 295$ and $f''(t) = 90t - 282$.
For $t\geq 13/2$, we have $f''(t) > 0$ and $f'(13/2) > 0$.
Hence, $f(x)$ is strictly increasing on $x > 13/2$. Since
$f(67/10) > 0$ our present assumption forces $13/2 < t < 67/10$. Together with
$t = (3+\sqrt{8n-19})/2$, this gives $n\leq 15$. By our assumption on $n$, we have $n=15$. 
As $k\geq 5$ in the present case and
$k\leq\lfloor (n-1)/2\rfloor = 7$, it remains only to consider \(k=5,6,7\), with
\(\theta=k/26\).

Substituting these three values into $F_t(\theta)$, and using $t^2 - 3t = 23$, gives
\[
F_t\Big(\frac{5}{26}\Big) < 0, \quad
F_t\Big(\frac{6}{26}\Big) < 0, \quad
F_t\Big(\frac{7}{26}\Big) < 0.
\]
Thus $F_t(\theta)<0$ for every admissible value of $\theta$ in this branch. Since the 
left-hand side of \eqref{eq:second-univer}, after subtracting $t-4$, is smaller than $F_t(\theta)$, 
inequality \eqref{eq:second-univer} follows.
\end{proof}

\begin{proposition}\label{prop:two-universal}
The extremal graph $G$ has two universal vertices.
\end{proposition}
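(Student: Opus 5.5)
We argue by contradiction, mirroring the proof of Proposition~\ref{prop:universal}. By Proposition~\ref{prop:universal} we take $v_1$ universal, so $x_1 = 1/(\lambda+1)$. Suppose no other vertex is universal, i.e.\ $d(v)\le n-2$ for every $v\neq v_1$. Then $c_{v_1} = n-5$ and $c_v\le n-6$ for all $v\ne v_1$. The second identity in \eqref{eq:lambda-E-L} gives $\sum_v c_v = 2n-12+k$, hence
\[
\sum_{v\neq v_1} c_v = (2n-12+k)-(n-5) = n-7+k .
\]

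We now bound $E$ exactly as in \eqref{eq:E-upper-bound}, but separating $v_1$. Since $x_i\le z$ for $i\ge 3$ and $c_{v_2}\le n-6$,
\[
E - c_{v_1}x_1 \le z\sum_{v\neq v_1} c_v + (y-z)c_{v_2} \le z(n-7+k) + (n-6)(y-z) = (n-6)y + (k-1)z .
\]
Here we use $y\ge z$, so that enlarging $c_{v_2}$ to $n-6$ only increases the bound. Therefore
\[
E \le \frac{n-5}{\lambda+1} + (n-6)y + (k-1)z .
\]

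Next we combine this with $\lambda - 4 = E - L$ from Lemma~\ref{lem:identities} and with $L \ge L_0(\lambda)$ from Lemma~\ref{lem:L-L0}. To compare with \eqref{eq:second-univer}, which is stated at $t=r_n$, we need to pass from $\lambda$ to $r_n$.
\begin{itemize}
\item The function $L_0(x)=k/(2n-k-4+3k/x)$ is increasing in $x$. Since $\lambda>r_n$, we get $L\ge L_0(\lambda)>L_0(r_n)$.
\item The term $(n-5)/(\lambda+1)$ is smaller than $(n-5)/(r_n+1)$, which moves in the favorable direction.
\end{itemize}
This yields
\[
\lambda-4 < \frac{n-5}{r_n+1} + (n-6)y + (k-1)z - L_0(r_n) .
\]
Lemma~\ref{lem:normalized-second} bounds the right-hand side by $r_n-4$. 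Hence $\lambda<r_n$, contradicting $\lambda>r_n$.

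The main obstacle is a subtle point in Lemma~\ref{lem:normalized-second}: it was proved using the constraints of Lemmas~\ref{lem:outer-ear-mass}, \ref{lem:lambda-y+lambda-z} and \ref{lem:weighted1} with $\lambda$ replaced by $t=r_n$. So we must check that the bounds on $y$ and $z$ expressed at $\lambda$ imply the corresponding ones at $r_n$.
\begin{itemize}
\item For $y$: $(1-\alpha_kL)/(\lambda+1) \le (1-\alpha_kL_0(r_n))/(r_n+1)$, since $\lambda>r_n$ and $L>L_0(r_n)$.
\item For $z$: the bound $\bigl((\lambda+2)/(\lambda+1) - \gamma'L - \lambda y\bigr)/(\lambda-2)$ is decreasing in $\lambda$. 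This follows because $(\lambda+2)/(\lambda+1)\le \lambda y$ fails only in a regime that is harmless, and each term decreases in $\lambda$.
\end{itemize}
As in Proposition~\ref{prop:universal}, we read the statement of Lemma~\ref{lem:normalized-second} as applying to the actual $y,z$ of $G$. This is how it was used for Lemma~\ref{lem:normalized-first}, so it suffices to cite it directly. The only genuinely new ingredient is the separated estimate for $E$ above.
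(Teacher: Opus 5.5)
Your argument is correct and is essentially the paper's proof: the same separated bound $E\le \frac{n-5}{\lambda+1}+(n-6)y+(k-1)z$ from $c_{v_1}=n-5$, $c_{v_2}\le n-6$ and $\sum_{v\ne v_1}c_v=n+k-7$, then $\lambda-4=E-L$ with $L\ge L_0(\lambda)>L_0(r_n)$, closed off by Lemma~\ref{lem:normalized-second}. You also state explicitly the $-L$ term and the monotonicity of $L_0$, which the paper's displayed chain leaves implicit. Your closing discussion of transferring the constraints from $\lambda$ to $t=r_n$ belongs to the proof of Lemma~\ref{lem:normalized-second}, not to this proposition; it is cleanest via $ty+(t-2)z\le \lambda y+(\lambda-2)z$ and $\frac{\lambda+2}{\lambda+1}<\frac{t+2}{t+1}$, and can simply be dropped here.
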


\begin{proof}
By Proposition~\ref{prop:universal}, $G$ has
a universal vertex $v_1$. Suppose, for a contradiction, that it is the only universal vertex. 

We first bound $E$ from above. Recall that $c_v = \max\{0,\, d(v) - 4\}$ and $E = \sum_{v} c_vx_v$. We have
\begin{equation}\label{eq:sum-v-v1}
\sum_{v\neq v_1} c_v = 2n-12+k - (n-5) = n + k - 7.
\end{equation}
Since $v_1$ is the unique universal vertex, every $v\neq v_1$ has degree at most $n-2$. Therefore,
\begin{align*}
E & = (n-5) x_1 + yc_{v_2} + \sum_{v\notin\{v_1,v_2\}} c_vx_v 
\leq \frac{n-5}{\lambda + 1} + (y-z) c_{v_2} + z\sum_{v\neq v_1} c_v \\
& \leq \frac{n-5}{\lambda + 1} + (n-6)(y-z) + z(n + k - 7) \\
& \leq \frac{n - 5}{r_n + 1} + (n - 6)y + (k - 1)z,
\end{align*}
where the last inequality uses the fact $\lambda > r_n$.
It follows from \eqref{eq:lambda-E-L} that
$\lambda - 4 = E - L \leq \frac{n - 5}{r_n + 1} + (n - 6)y + (k - 1)z$.

On the other hand, Lemma \ref{lem:normalized-second} gives
\[
\frac{n-5}{r_n + 1} + (n - 6)y + (k-1) z - L_0(r_n) < r_n - 4.
\]
Combining these two inequalities above, we have $\lambda < r_n$,
a contradiction. Thus $G$ has two
universal vertices.
\end{proof}

\section{Completion of the proof}
\label{sec:finish-proof}

In Section \ref{sec:second-universal-vertex}, Proposition \ref{prop:two-universal} shows that, 
for $n\geq 15$, every $n$-vertex planar graph with maximum spectral radius contains two
vertices of degree $n-1$. In this section, we use this structural
information to complete the proof of our main theorem.

\begin{proof}[Proof of Theorem \ref{thm:main}]
For $n\leq 14$, we use SageMath software to find the extremal planar graphs (see the \href{https://github.com/ahhylau/Codes_for_max_planar_graphs_spectral_radius}{code}). In the following we assume $n\geq 15$.
Let $v_1,v_2$ be the two universal vertices supplied by Proposition
\ref{prop:two-universal}, and put $G':= G - \{v_1,v_2\}$.
If a vertex $w\in V(G')$ had three neighbors $a,b,c$ in $G'$, then
the sets $\{v_1,v_2,w\}$ and $\{a,b,c\}$ would span a $K_{3,3}$. Hence $\Delta(G')\leq 2$.

The graph $G'$ contains no cycle. Indeed, if it contained a cycle
$C_{\ell}$, then the subgraph on that cycle together with $v_1,v_2$ would
contain $K_2\vee C_{\ell}$. This graph has
$1 + 2\ell + \ell = 3\ell + 1$
edges on $\ell + 2$ vertices, exceeding the planar bound
$3(\ell + 2) - 6 = 3\ell$. Thus $G'$ is acyclic. Together with $\Delta(G')\leq 2$, this
shows that $G'$ is a disjoint union of paths and isolated vertices.

Finally, since $G$ is a triangulation,
\begin{align*}
e(G')
& = e(G) - (1 + 2(n-2)) = 3n - 6 - (2n - 3) = |V(G')| - 1.
\end{align*}
An acyclic graph on $|V(G')|$ vertices with $|V(G')|-1$ edges is
connected. Therefore $G' = P_{n-2}$, and
$G=K_2\vee P_{n-2}$.
This proves the Theorem \ref{thm:main}.
\end{proof}

\section*{Declaration on the use of AI}

The authors used ChatGPT to generate code for searching the extremal
planar graphs for $n\leq 14$, and to assist with several computations
and symbolic derivations. ChatGPT was also used for grammar checking,
language polishing, and improving the clarity of the exposition.
The core ideas and overall strategy of the proof were developed by the
authors. The authors take full responsibility for the correctness and
originality of the paper.

\end{document}